\documentclass{amsart}
\usepackage{amsfonts,amsmath,amssymb}
\usepackage{mathrsfs,mathtools}
\usepackage{enumerate}
\usepackage{xspace}
\usepackage{hyperref}
\DeclareMathAlphabet{\mathpzc}{OT1}{pzc}{m}{it}

%%%%%%%%%%%%%%%%%%%%%%%%%%
\newcommand{\Nin}{\,{\mbox{\,\raisebox{5.0pt} {\tiny$\circ$} \kern-9.6pt}\N }}
\newcommand{\Ninn}{{\mbox{\,\raisebox{4.1pt} {\tiny$\circ$} \kern-8.1pt}\N }}
\newcommand{\R}{\mathbb{R}}
\newcommand{\Y}{\mathpzc{Y}}
\newcommand{\N}{\mathpzc{N}}
\newcommand{\C}{\mathcal{C}}
\newcommand{\V}{\mathbb{V}}
\newcommand{\T}{\mathscr{T}}

\newcommand{\Ss}{\mathscr{S}}
\newcommand{\HL}{ \mbox{ \raisebox{6.7pt} {\tiny$\circ$} \kern-8.9pt} {H_L^1} }
\newcommand{\HLn}{{\mbox{\,\raisebox{4.7pt} {\tiny$\circ$} \kern-8.5pt}{H}^1_L  }}
\newcommand{\Tr}{\mathbb{T}}
\DeclareMathOperator*{\tr}{tr_\Omega}

\newcommand{\Laps}{(-\Delta)^s}
\newcommand{\GRAD}{\nabla}
\newcommand{\DIV}{\textrm{div}}
\newcommand{\diff}{\, \mbox{\rm d}}
\newcommand{\ie}{i.e.,\@\xspace}
\newcommand{\Hs}{\mathbb{H}^s(\Omega)}

\newcommand{\Hsd}{\mathbb{H}^{-s}(\Omega)}
\newcommand{\ue}{\mathscr{U}}
\newcommand{\ve}{\mathscr{V}}
\newcommand{\ze}{\mathscr{Z}}
\newcommand{\calI}{{\mathcal I}}

%%%%%%%%%%%%%%%%%%%%%%%%%%
\numberwithin{equation}{section}
\newtheorem{theorem}[equation]{Theorem}
\newtheorem{lemma}[equation]{Lemma}
\newtheorem{proposition}[equation]{Proposition}

\theoremstyle{definition}

\theoremstyle{definition}
\newtheorem{remark}[equation]{Remark}

\begin{document}
\title[The classical, thin and fractional elliptic obstacle problems]{Convergence rates for the classical, thin and fractional elliptic obstacle problems}

\author[R.H.~Nochetto]{Ricardo H.~Nochetto}
\address[R.H.~Nochetto]{Department of Mathematics and Institute for Physical Science and Technology,
University of Maryland, College Park, MD 20742, USA.}
\email{rhn@math.umd.edu}
\thanks{RHN has been supported in part by NSF grants DMS-1109325 and DMS-1411808.}

\author[E.~Ot\'arola]{Enrique Ot\'arola}
\address[E.~Ot\'arola]{Department of Mathematics, University of Maryland, College Park, MD 20742, USA and Department of Mathematical Sciences, George Mason University, Fairfax, VA 22030, USA.}
\email{kike@math.umd.edu}
\thanks{EO has been supported in part by NSF grants DMS-1109325 and DMS-1411808}

\author[A.J.~Salgado]{Abner J.~Salgado}
\address[A.J.~Salgado]{Department of Mathematics, University of Tennessee, Knoxville, TN 37996, USA.}
\email{asalgad1@utk.edu}
\thanks{AJS has been supported in part by NSF grant DMS-1418784.}

\subjclass[2000]{35J70,           %%   Degenerate elliptic equations
35R11,                    %%   Fractional partial differential equations
35R35,                    %%   Free boundary problems
41A29,                    %%   Approximation with constraints
65K15,                    %%   Numerical methods for variational inequalities and related problems
65N15,                    %%   Error bounds
65N30}                    %%   Finite elements, Rayleigh-Ritz and Galerkin methods, finite methods;

\keywords{obstacle problem, thin obstacles, free boundaries, finite elements, fractional diffusion, anisotropic elements.}

\begin{abstract}
We review the finite element approximation of the classical obstacle problem in energy and max-norms and derive error estimates for both the solution and the free boundary. On the basis of recent regularity results we present an optimal error analysis for the thin obstacle problem. Finally, we discuss the localization of the obstacle problem for the fractional Laplacian and prove quasi-optimal convergence rates.
\end{abstract}

\date{Submitted on \today.}

\maketitle

%%%%%%%%%%%%%%%%%%%%%%%%%%%%%%%%%%%%%%%%%%%%%%%%%%
\section{Introduction}
\label{sec:introduccion}
%%%%%%%%%%%%%%%%%%%%%%%%%%%%%%%%%%%%%%%%%%%%%%%%%%
Obstacle and obstacle-type problems serve as a model for many phenomena and stand as the prototype for many theories: variational inequalities, constrained minimization and free boundary problems, to name a few. They are also the first truly nonlinear and nonsmooth problem one encounters in the study of elliptic partial differential equations (PDE) and their numerical approximation: linearization fails!

The purpose of this short note is to review the finite element approximation of the solution to the classical, thin and fractional obstacle problems --- the three basic prototypes. Some of the results in this note are classical but are scattered in the literature. 
Others, specially those about nonlocal operators, are completely new.

%%%%%%%%%%%%%%%%%%%%%%%%%%%%%%%%%%%%%%%%%%%%%%%%%%
\section{The classical obstacle problem}
\label{sec:classical_obstacle}
%%%%%%%%%%%%%%%%%%%%%%%%%%%%%%%%%%%%%%%%%%%%%%%%%%
Let $\Omega$ be an open, bounded and connected domain of $\R^n$
with boundary $\partial \Omega$.
Let $f \in H^{-1}(\Omega)$ and $\psi \in H^1(\Omega) \cap C(\bar{\Omega})$ satisfying $\psi \leq 0$
on $\partial \Omega$. We consider
the following classical elliptic obstacle problem:
\begin{equation}
\label{eq:class_obstacle}
 u \in \mathcal{K}: \qquad (\nabla u , \nabla(u-v)) \leq \langle f,u-v \rangle \qquad \forall v \in \mathcal{K},
\end{equation}
where $\mathcal{K}$ denotes the convex set of admissible displacements
\begin{equation}
\label{K_classical}
\mathcal{K}:= \{v \in H_0^1(\Omega): v \geq \psi \textrm{ a.e. in } \Omega   \},
\end{equation}
$(\cdot,\cdot)$ corresponds to the inner product in $L^2(\Omega)$ and $\langle \cdot,\cdot \rangle$
denotes the duality pairing between $H^{-1}(\Omega)$ and $H_0^{1}(\Omega)$. It is known that under
these conditions, \eqref{eq:class_obstacle} has a unique solution $u \in H_0^1(\Omega)$ and there exists
a nonnegative Radon measure $\mu$ such that
\[
 \int_{\Omega} v \diff \mu = (\nabla u, \nabla v) - \langle f,v \rangle \qquad \forall v \in H_0^1(\Omega),
\]
and 
$
 \textrm{supp}(\mu) \subset \Lambda := \{ x \in \Omega: u(x) = \psi(x) \},
$
where $\Lambda$ is the so-called \emph{coincidence} or \emph{contact set}; see 
\cite{Friedman,KS,PSU:12} for details. In addition, if 
%AJS: Removed Rodrigues from references above
$f \in L^2(\Omega)$ and $\psi \in H^2(\Omega)$, then $u \in H^2(\Omega)$ provided $\partial \Omega$
is $C^{1,1}$ or $\Omega$ is a convex polyhedron. In this case $\mu$ is absolutely continuous with respect to Lebesgue measure 
$\diff \mu = \lambda \diff x$ with $0 \leq \lambda \in L^2(\Omega)$. With these assumptions we can rewrite \eqref{eq:class_obstacle}
as a {\it complementarity system} \cite{Friedman,KS,PSU:12}:
\begin{equation}
\label{eq:complementary_system}
\lambda = -\Delta u - f \geq 0, \quad u - \psi \geq 0, \quad (\Delta u + f)(u - \psi) = 0 \quad \textrm{a.e. in }\Omega.
 \end{equation}

We define the \emph{non-coincidence set}
$
  \Omega^+ := \{ x \in \Omega: u(x) > \psi(x)  \}
$
which, if $u$ is continuous, is an open set, and the \emph{free boundary}
$
\Gamma := \partial \Omega^+ \cap \Omega. 
$

We remark that for the obstacle problem \eqref{eq:class_obstacle}, the correspondence between the right hand side
$f$ and $u$ is non-linear, non-differentiable, and most notably not one-to-one since a change of $f$ within the contact set
$\Lambda$ may yield no change in $u$.

%%%%%%%%%%%%%%%%%%%%%%%%%%%%%%%%%%%%%%%%%%%%%%%%%%
\subsection{Finite element approximation}
\label{subsec:fe_approximation}
%%%%%%%%%%%%%%%%%%%%%%%%%%%%%%%%%%%%%%%%%%%%%%%%%%

Let us now describe the discretization of problem \eqref{eq:class_obstacle}.
To avoid technical difficulties let us assume that the boundary of $\Omega$ is polyhedral.
% ; the difficulties inherent to curved boundaries
% could be handled with the methods of \cite{Bernardi}.
Let $\T = \{ T \} $  be a mesh of $\Omega$ into cells $T$ 
such that
\[
  \bar \Omega = \bigcup_{T \in \T} T, \qquad
  |\Omega| = \sum_{T \in \T} |T|,
\]
where $T \subset \R^n$ is a cell that is isoparametrically equivalent either to the unit cube 
$[0,1]^n$ or the unit simplex in $\R^n$.
The partition $\T$ is assumed to be conforming or compatible, \ie the 
intersection of any two cells $T$ and $T'$ in $\T$ is either empty or a common lower
% dimensional cell. We denote by $\Tr$ the 
collection of all conforming meshes, which are refinements of a
  common mesh $\T_0$. We also assume 
$\Tr$ to be \emph{shape regular}, \ie there exists a constant $\sigma > 1$ such that
\begin{equation}
\label{shape_reg}
 \max \left\{ \sigma_T : T \in \T, \T \in \Tr \right\} \leq \sigma,
\end{equation}
where $\sigma_T:=h_T/\rho_T$ is the shape coefficient of $T$. For simplicial elements,
$h_T = \textrm{diam}(T)$ and $\rho_T$ is the diameter of the largest sphere inscribed in $T$; see 
\cite{CiarletBook}.
For the definition of $h_T$ and $\rho_T$ in the case of cubes, we refer to~\cite{CiarletBook}.
The collection of meshes $\Tr$ is \emph{quasiuniform} if for all $\T \in \Tr$ and all $T,T' \in \T$ we have $h_T \lesssim h_{T'} \lesssim h_T$. 
In this case the \emph{mesh size} is $h_\T := \max\{ h_T : T \in \T \}$.

Given a mesh $\T \in \Tr$, we define the following finite element space 
\begin{equation}
\label{eq:fe_space}
  \V(\T) := \left\{
            W \in \C^0( \bar{\Omega}): \ W|_T \in \mathcal{P}(T) \ \forall T \in \T, \
            W|_{\partial \Omega} = 0
          \right\} \subset H_0^1(\Omega),
\end{equation}
where if $T$ is a simplex $\mathcal{P}(T)=\mathbb{P}_1(T)$, \ie polynomials of degree at most one, and if $T$ is a cube, $\mathcal{P}(T)=\mathbb{Q}_1(T)$, that is, polynomials of degree at most one in each variable.

Given a mesh $\T \in \Tr$ and $T\in \T$, we denote by $\N(T)$
the set of nodes of $T$. We then define $\N(\T):= \cup_{T \in \T} \N(T)$ and 
$~\Nin(\T):= \N(\T) \cap \Omega$. Any discrete function $W \in \V(\T)$ is characterized
by its nodal values on the set $\Nin(\T)$:~
$
 W = \sum_{z \in~\Ninn(\T)} W(z) \phi_{z}
$,
where $\{\phi_{z}\}_{z \in ~\Ninn(\T)} $ is the canonical basis of $\V(\T)$,
that is $\phi_z(\tilde z) := \delta_{z\tilde z}$ for $z,\tilde z \in \Nin(\T)$.

The discrete admissible set $\mathcal{K}_{\T}$, \ie the discrete counterpart of $\mathcal{K}$, is
\begin{equation}
\label{eq:K_discrete}
\mathcal{K}_{\T}:= \{ W \in \V(\T): W \geq I_{\T} \psi \textrm{ on } \Omega \},
\end{equation}
where $I_{\T}$ is the Lagrange interpolation operator $I_{\T}\psi =  \sum_{z \in \Ninn(\T)}  \psi(z) \phi_{z}$.
The set $\mathcal{K}_{\T}$ is nonempty, convex, closed but in general is not a subset of $\mathcal{K}$. The classical literature would refer then to this setting 
as a nonconforming approximation.
The \emph{discrete problem} reads
\begin{equation}
\label{eq:discrete_obstacle}
 U_{\T} \in \mathcal{K}_{\T}: \qquad (\nabla U_{\T} , \nabla(U_{\T}-V)) \leq ( f,U_{\T}-V ) \qquad \forall V 
 \in \mathcal{K}_{\T}.
\end{equation}
It is well known that the solution $U_{\T} \in \V(\T)$ exists and is unique \cite{BHR:77,CiarletBook}.

\begin{remark}[positivity preserving operator]
\label{rm:CN:00}
In \eqref{eq:K_discrete} the Lagrange interpolant can be replaced 
by the positivity preserving operator $\Pi_{\T}:L^1(\Omega) \rightarrow \V(\T)$ of Chen and Nochetto \cite{CN:00}, which
% requires minimal regularity, 
% already said when showing the domain
exhibits optimal approximation properties and preserves positivity, \ie $w\geq 0$ implies $\Pi_\T w \geq 0$. 
This allows us to weaken the regularity assumption on the obstacle to $\psi \in H^1(\Omega)$;
see Remark~\ref{re:obstascle_reg}.
\end{remark}

\begin{remark}[discrete constraints]
Since the discrete unilateral constraint in \eqref{eq:K_discrete} is only enforced at the nodes
of $\T$, its implementation is an easy task in practice. Alternative constraints such as
$W \geq \psi$ or $W \geq \Pi_{\T} \psi$ simplify the error analysis but complicate the implementation.
\end{remark}

\begin{remark}[efficient solvers]
The solution $U_{\T} \in \V(\T)$ of problem \eqref{eq:discrete_obstacle} can be efficiently computed
via monotone multigrid methods; see \cite{MR2867664} and the references therein.
% \cite{MR2867664,KO:88,MR1310316,MR1376109,Tai:03}.
\end{remark}

%%%%%%%%%%%%%%%%%%%%%%%%%%%%%%%%%%%%%%%%%%%%%%%%%%
\subsection{A priori error analysis in the energy norm}
\label{sub:aprioriH1}
%%%%%%%%%%%%%%%%%%%%%%%%%%%%%%%%%%%%%%%%%%%%%%%%%%

We now derive optimal $H^1$-error estimates under the assumption of full regularity. We follow the seminal paper by
Brezzi, Hager and Raviart \cite{BHR:77}; see also \cite{MR0391502}.

\begin{theorem}[$H^1$ error estimate]
\label{TH:H1_opt_est}
Let $f \in L^2(\Omega)$ and $\psi \in H^2(\Omega)\cap C(\bar{\Omega})$ with $\psi \leq 0$ on $\partial\Omega$,
and let $\Omega$ be $C^{1,1}$ or a convex polygon. Then
\begin{equation}
\label{eq:optimal_H1}
\| \nabla(u - U_{\T} ) \|_{L^2(\Omega)} \lesssim h_{\T} ( \| f \|_{L^2(\Omega)} + \| \psi \|_{H^2(\Omega)} ).
\end{equation}
\end{theorem}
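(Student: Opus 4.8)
The plan is to adapt the classical Brezzi--Hager--Raviart argument to the present nonconforming setting where $\mathcal{K}_\T \not\subset \mathcal{K}$. First I would record the regularity input: under the stated hypotheses $u \in H^2(\Omega)$, the multiplier is $\lambda = -\Delta u - f \in L^2(\Omega)$ with $\lambda \geq 0$, and the complementarity system \eqref{eq:complementary_system} holds, in particular $\lambda(u-\psi) = 0$ a.e. The starting point is the standard G\aa rding-type identity: for any $V \in \V(\T)$,
\[
\|\nabla(u-U_\T)\|_{L^2(\Omega)}^2 = (\nabla(u-U_\T),\nabla(u-V)) + (\nabla(u-U_\T),\nabla(V-U_\T)).
\]
The first term is controlled by Young's inequality, absorbing $\tfrac12\|\nabla(u-U_\T)\|^2$ into the left side and leaving $\tfrac12\|\nabla(u-V)\|_{L^2(\Omega)}^2$. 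The second term is the crux: I would rewrite $(\nabla u, \nabla(V-U_\T)) = (f,V-U_\T) + \int_\Omega (V-U_\T)\,\lambda\diff x$ using the definition of $\lambda$, and use the discrete variational inequality \eqref{eq:discrete_obstacle}, which gives $(\nabla U_\T,\nabla(U_\T-V)) \leq (f,U_\T-V)$, i.e. $(\nabla U_\T,\nabla(V-U_\T)) \geq (f,V-U_\T)$, valid provided $V \in \mathcal{K}_\T$. Subtracting yields
\[
(\nabla(u-U_\T),\nabla(V-U_\T)) \leq \int_\Omega (V-U_\T)\,\lambda \diff x .
\]

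It remains to bound $\int_\Omega (V-U_\T)\lambda\diff x$ for a good choice of $V \in \mathcal{K}_\T$. The natural choice is $V = I_\T u$ (or $\Pi_\T u$), which lies in $\mathcal{K}_\T$ because $u \geq \psi$ pointwise implies $I_\T u \geq I_\T \psi$ at every node, hence everywhere by linearity. Then I would split
\[
\int_\Omega (I_\T u - U_\T)\,\lambda \diff x = \int_\Omega (I_\T u - u)\,\lambda\diff x + \int_\Omega (u-\psi)\,\lambda\diff x + \int_\Omega (\psi - U_\T)\,\lambda\diff x.
\]
The middle integral vanishes by complementarity. For the third, since $U_\T \in \mathcal{K}_\T$ we have $U_\T \geq I_\T\psi$ and $\lambda \geq 0$, so $\int_\Omega(\psi - U_\T)\lambda \leq \int_\Omega(\psi - I_\T\psi)\lambda \leq \|\psi - I_\T\psi\|_{L^2(\Omega)}\|\lambda\|_{L^2(\Omega)} \lesssim h_\T^2 \|\psi\|_{H^2(\Omega)}\|\lambda\|_{L^2(\Omega)}$ by standard interpolation estimates. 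The first integral is bounded by $\|I_\T u - u\|_{L^2(\Omega)}\|\lambda\|_{L^2(\Omega)} \lesssim h_\T^2\|u\|_{H^2(\Omega)}\|\lambda\|_{L^2(\Omega)}$. Finally I would take $V = I_\T u$ also in the Young's inequality term, so $\|\nabla(u - I_\T u)\|_{L^2(\Omega)}^2 \lesssim h_\T^2\|u\|_{H^2(\Omega)}^2$.

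Collecting the three contributions gives $\|\nabla(u-U_\T)\|_{L^2(\Omega)}^2 \lesssim h_\T^2\big(\|u\|_{H^2(\Omega)}^2 + (\|u\|_{H^2(\Omega)} + \|\psi\|_{H^2(\Omega)})\|\lambda\|_{L^2(\Omega)}\big)$, and using $\|\lambda\|_{L^2(\Omega)} \leq \|f\|_{L^2(\Omega)} + \|\Delta u\|_{L^2(\Omega)} \lesssim \|f\|_{L^2(\Omega)} + \|u\|_{H^2(\Omega)}$ together with the a priori bound $\|u\|_{H^2(\Omega)} \lesssim \|f\|_{L^2(\Omega)} + \|\psi\|_{H^2(\Omega)}$ (elliptic regularity on a $C^{1,1}$ or convex domain) yields \eqref{eq:optimal_H1} after taking square roots. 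The main obstacle, and the place where care is genuinely needed, is the term $\int_\Omega(\psi - U_\T)\lambda\diff x$ arising from the nonconformity $\mathcal{K}_\T \not\subset \mathcal{K}$: one must exploit that the discrete constraint is enforced exactly at the nodes so that $U_\T \geq I_\T\psi$ pointwise, converting an uncontrolled sign into the interpolation error $\psi - I_\T\psi$; everything else is bookkeeping with Young's inequality and standard $\mathbb{P}_1/\mathbb{Q}_1$ interpolation bounds.
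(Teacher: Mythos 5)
Your proposal is correct and takes essentially the same Brezzi--Hager--Raviart route as the paper: Young's inequality on the consistency term with $V=I_\T u$, the discrete variational inequality to reduce the remaining term to $\int_\Omega (I_\T u - U_\T)\lambda$, and then complementarity plus $\lambda\ge 0$ and $U_\T\ge I_\T\psi$. The only (cosmetic) difference is the arrangement of the splitting --- you write $(I_\T u-u)+(u-\psi)+(\psi-U_\T)$ and bound $\psi-U_\T\le\psi-I_\T\psi$, while the paper groups the interpolation error as $I_\T(u-\psi)-(u-\psi)$ and discards $(\lambda,I_\T\psi-U_\T)\le 0$ outright --- which yields the same estimate.
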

\begin{proof}
Notice, first of all, that the underlying assumptions imply that $u \in H^2(\Omega)\cap H^1_0(\Omega)$.
Observe now that 
\begin{equation}\label{eq:aux}
 \| \nabla(u - U_{\T} ) \|^2_{L^2(\Omega)} = 
\left( \nabla(u - U_{\T} ), \nabla(u - I_{\T}u ) \right)
+
\left( \nabla(u - U_{\T} ), \nabla(I_{\T}u -U_{\T}) \right)
\end{equation}
and 
\[
\left( \nabla(u - U_{\T} ), \nabla(u - I_{\T}u ) \right) \leq
\frac{1}{2} \| \nabla(u - U_{\T} ) \|^2_{L^2(\Omega)} + \frac{1}{2} \| \nabla(u - I_{\T}u ) \|^2_{L^2(\Omega)}.
\]
Integrate by parts the second term in \eqref{eq:aux} 
and set $V = I_{\T} u \in \mathcal{K}_{\T}$ in \eqref{eq:discrete_obstacle}, to derive
\[
\left( \nabla(u - U_{\T} ), \nabla(I_{\T}u -U_{\T}) \right) \leq 
\left( -\Delta u - f , I_{\T}u -U_{\T} \right) = \left( \lambda , I_{\T}u -U_{\T} \right).
\]
We rewrite the right hand side of the term above as follows:
\begin{equation}
\label{aux:rhs}
\big( \lambda , I_{\T}u -U_{\T} \big) = \big( \lambda , I_{\T}(u -
\psi) -( u - \psi) \big)
+ \big( \lambda, u -\psi \big) + \big( \lambda, I_{\T} \psi -U_{\T} \big). 
\end{equation}
In view of the complementarity relation in 
\eqref{eq:complementary_system}, we deduce that 
$(\lambda, u - \psi) = 0$. The fact that $U_{\T} \geq I_{\T} \psi$, in conjunction with
$\lambda\ge0$, which also stems from \eqref{eq:complementary_system}, 
implies $(\lambda, I_{\T} \psi - U_{\T} ) \leq 0$.
We can now deal with the first term in \eqref{aux:rhs} via a
standard interpolation estimate
\[
| \left( \lambda , I_{\T}(u-\psi)-(u-\psi) \right) | \lesssim h^2_{\T} \|\lambda \|_{L^2(\Omega)} | u - \psi |_{H^2(\Omega)}.
\]
Combining the preceding estimates with $ \| \nabla(u - I_{\T}u ) \|^2_{L^2(\Omega)} \lesssim 
h_{\T}^2|u|^2_{H^2(\Omega)}$, we obtain
\[
\| \nabla(u - U_{\T} ) \|^2_{L^2(\Omega)} \lesssim h^2_{\T}\left( |u|_{H^2(\Omega)}^2  +  
\|\lambda \|_{L^2(\Omega)} | u - \psi |_{H^2(\Omega)}\right).
\]
This implies \eqref{eq:optimal_H1} and concludes the proof.
\end{proof}

\begin{remark}[optimal error estimate and obstacle regularity]
\label{re:obstascle_reg}
When $n \geq 3$ we can weaken the assumptions in Theorem~\ref{TH:H1_opt_est} from 
$ \psi \in H^2(\Omega) \cap C(\bar{\Omega})$
to $ \psi \in H^2(\Omega)$ by replacing 
% the Lagrange interpolant 
$I_{\T}$
with the positivity preserving operator $\Pi_{\T}$ of \cite{CN:00}. 
% This allows us to extend the analysis to any dimension $n \geq 1$.
The key step is the positivity property:
$u - \psi \geq 0$ implies $\Pi_{\T} (u - \psi) \geq 0$ so that $\Pi_{\T} u \in \mathcal{K}_{\T}$.
\end{remark}

%%%%%%%%%%%%%%%%%%%%%%%%%%%%%%%%%%%%%%%%%%%%%%%%%%
\subsection{Pointwise a priori error analysis}
\label{sub:aprioriLinfty}
%%%%%%%%%%%%%%%%%%%%%%%%%%%%%%%%%%%%%%%%%%%%%%%%%%
Let us now obtain pointwise error estimates which, as we will see in \S~\ref{subsec:free_boundary}, are essential to study the approximation of the free boundary $\Gamma$.
This theory relies on two crucial ingredients, whose details are beyond the scope of this paper.
The first one is a quasi-optimal \emph{pointwise error estimate} for the so-called elliptic projection:
\begin{equation}
\label{eq:elliptic_proj}
R_{\T}u \in \V(\T): \qquad \int_{\Omega} \nabla(u - R_{\T}u) \nabla V  = 0 \quad \forall V \in \V(\T). 
\end{equation}

\begin{proposition}[pointwise error estimate]
\label{prop:Linfty}
If $u \in H_0^1(\Omega) \cap W_{\infty}^{2}(\Omega)$, then there exists a constant $C$ independent of $h_{\T}$ and $u$ such that
\begin{equation}
\label{eq:u-Ru}
 \| u - R_{\T}u \|_{L^{\infty}(\Omega)} \leq C h^2_{\T} | \log h_{\T} | |u|_{W_{\infty}^{2}(\Omega)}.
\end{equation}
\end{proposition}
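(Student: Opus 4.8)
The plan is to reduce the pointwise estimate for the elliptic (Ritz) projection to a well-known duality argument against regularized Green's functions, which is the classical route for $L^\infty$ estimates on quasi-uniform meshes. First I would fix a point $x_0 \in \Omega$ where $|u - R_{\T}u|$ (nearly) attains its maximum, and — since $u - R_{\T}u$ is piecewise polynomial hence can be evaluated pointwise — introduce a smoothed Dirac mass $\delta_{x_0}^h \in C_0^\infty$ supported in a cell neighborhood of $x_0$, normalized so that $(\delta_{x_0}^h, V) = V(x_0)$ for all $V \in \V(\T)$, with $\|\delta_{x_0}^h\|_{L^1(\Omega)} \lesssim 1$ and $\|\delta_{x_0}^h\|_{L^\infty(\Omega)} \lesssim h_\T^{-n}$. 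Let $g \in H_0^1(\Omega)$ solve $-\Delta g = \delta_{x_0}^h$ in $\Omega$; this is the \emph{regularized Green's function}. Testing \eqref{eq:elliptic_proj} and using Galerkin orthogonality, $(u - R_\T u)(x_0) = (\nabla(u-R_\T u), \nabla g) = (\nabla(u - R_\T u), \nabla(g - R_\T g))$, so it suffices to bound $\|\nabla(u - R_\T u)\|$ and $\|\nabla(g - R_\T g)\|$ in suitably weighted or $L^1$-type norms and combine.

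The crux is the \textbf{weighted a priori estimate for the Green's function approximation}: one must show $\|\nabla(g - R_\T g)\|_{L^1(\Omega)} \lesssim |\log h_\T|$, uniformly in $x_0$. This is the standard but technically delicate ingredient, going back to Scott, Natterer, Nitsche, and Schatz–Wahlbin. The proof introduces the dyadic annular decomposition of $\Omega$ around $x_0$, $\Omega = \bigcup_j A_j$ with $A_j \sim \{ 2^{-j-1} d_\T \le |x - x_0| \le 2^{-j} d_\T\}$ (down to the innermost scale $\sim h_\T$, giving $O(|\log h_\T|)$ annuli), uses the pointwise decay bounds $|\nabla g| \lesssim |x - x_0|^{1-n}$ and $|D^2 g| \lesssim |x-x_0|^{-n}$ away from $x_0$ together with a weighted ("Nitsche trick" / local energy) estimate for $g - R_\T g$ on each annulus, and sums the resulting geometric-type series — the logarithm arising precisely from the number of scales. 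I would \emph{invoke} this as the known result \eqref{eq:u-Ru}-type bound for the Green's function (citing, e.g., the pointwise finite element theory in the references) rather than reproving it.

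With that in hand, the remaining steps are routine. For the term involving $u$: by a standard interpolation estimate and best-approximation in the energy norm, combined with an $L^\infty$-weighted bound, $\|u - R_\T u\|$ against the Green's function yields a factor $h_\T^2 |u|_{W^2_\infty(\Omega)}$; concretely one writes $(\nabla(u - R_\T u), \nabla(g - R_\T g)) \le C h_\T^2 |u|_{W^2_\infty(\Omega)} \, \|\nabla(g - R_\T g)\|_{L^1(\Omega)}$ after localizing $u - R_\T u$ via its own (known) $W^1_\infty$ or weighted stability estimate, and then the Green's-function bound contributes the $|\log h_\T|$. Collecting constants gives \eqref{eq:u-Ru}. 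The main obstacle, to be clear, is genuinely the weighted/dyadic estimate for $g - R_\T g$; everything else is interpolation theory and Cauchy–Schwarz. Since the statement explicitly says the details are beyond the scope of the paper, I would present the argument at the level of this sketch and defer to the literature for the weighted estimates.
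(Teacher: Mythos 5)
The paper offers no proof of Proposition~\ref{prop:Linfty}: it is stated as a classical result with citations to Nitsche, Scott and Schatz--Wahlbin, and its details are explicitly declared beyond the scope of the paper. Your choice to sketch the regularized Green's function duality and defer the weighted/dyadic estimates to the literature is therefore consistent with the paper's own treatment, and the architecture you describe (smoothed Dirac mass, dual problem $-\Delta g=\delta^h_{x_0}$, Galerkin orthogonality, dyadic annuli around $x_0$ producing the logarithm) is indeed the standard route.

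There is, however, a genuine quantitative gap in the key lemma you invoke. The bound $\|\nabla(g-R_{\T}g)\|_{L^1(\Omega)}\lesssim|\log h_{\T}|$ is the estimate attached to the regularized Green's function for the \emph{derivative} (right-hand side $\partial_i\delta^h_{x_0}$), and it is what produces the first-order gradient estimate $\|\nabla(u-R_{\T}u)\|_{L^\infty(\Omega)}\lesssim h_{\T}|\log h_{\T}|\,|u|_{W^2_\infty(\Omega)}$. For the zeroth-order estimate \eqref{eq:u-Ru}, with $-\Delta g=\delta^h_{x_0}$, the correct key lemma is $\|\nabla(g-R_{\T}g)\|_{L^1(\Omega)}\lesssim h_{\T}|\log h_{\T}|$ (consistent with $\|D^2g\|_{L^1(\Omega\setminus B_{h_\T}(x_0))}\sim|\log h_{\T}|$ and one order of interpolation), which one pairs with $(\nabla(u-R_{\T}u),\nabla(g-R_{\T}g))=(\nabla(u-I_{\T}u),\nabla(g-R_{\T}g))\le\|\nabla(u-I_{\T}u)\|_{L^\infty(\Omega)}\|\nabla(g-R_{\T}g)\|_{L^1(\Omega)}\lesssim h_{\T}|u|_{W^2_\infty(\Omega)}\cdot h_{\T}|\log h_{\T}|$. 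As written, your two ingredients do not combine to give second order: the $u$-factor measured in $W^{1,\infty}$ is only $O(h_{\T})$, and the inequality you write, extracting $h_{\T}^2$ from the $u$-term while keeping only $\|\nabla(g-R_{\T}g)\|_{L^1(\Omega)}$, would require transferring a derivative onto $g-R_{\T}g$ by elementwise integration by parts (hence controlling $\|g-R_{\T}g\|_{L^1(\Omega)}$ and jump terms), which is not what you set up. So the sketch as stated yields only $O(h_{\T}|\log h_{\T}|)$. A minor point: $u-R_{\T}u$ is not piecewise polynomial, but it is continuous since $u\in W^2_\infty(\Omega)$, and the pointwise evaluation via $\delta^h_{x_0}$ is made rigorous through the splitting $\|u-I_{\T}u\|_{L^\infty(\Omega)}+\|I_{\T}u-R_{\T}u\|_{L^\infty(\Omega)}$, the second term being discrete.
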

This classical result is due to Nitsche \cite{Nitsche:77}, Scott \cite{Scott:76} and Schatz and Wahlbin \cite{SW:82}. For convenience, in what follows we denote the right hand side of \eqref{eq:u-Ru} by
\begin{equation}
\label{eq:defofeta}
  \eta( h_\T ) := C h_\T^2 |\log h_\T| |u|_{W_{\infty}^{2}(\Omega)}.
\end{equation}

The second ingredient is the \emph{discrete maximum principle} (DMP): The mesh $\T$ is \emph{weakly acute} if
\[
 \int_{\Omega} \nabla \phi_{z} \nabla \phi_{\tilde{z}} \leq 0 \qquad \forall z,\tilde{z} \in \N(\T).
\]
If $n=2$ this property holds over simplicial meshes provided that the sum of angles opposite to a side does not exceed 
$\pi$ \cite[\S2]{CR:73}.

\begin{proposition}[DMP]
\label{pro:dmp}
Let $\T$ be weakly acute and $W \in \V(\T)$ be discrete subharmonic, \ie
\[
 \int_{\Omega} \nabla W \nabla \phi_{z} \leq 0 \qquad \forall z \in \Nin(\T).
\]
Then
$
 \max_{\bar{\Omega} } W = \max_{\partial \Omega} W
$.
\end{proposition}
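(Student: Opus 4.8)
The plan is to recast the statement as a monotonicity property of the interior stiffness matrix. Since every $W\in\V(\T)$ vanishes on $\partial\Omega$ by \eqref{eq:fe_space}, we have $\max_{\partial\Omega}W=0$; and since $W$ is continuous and piecewise affine, its maximum over $\bar\Omega$ is attained at a node of $\T$. Hence the assertion is equivalent to $W(z)\le 0$ for every $z\in\Nin(\T)$. Writing $W=\sum_{z\in\Nin(\T)}W(z)\phi_z$ and introducing the matrix $A:=\big(\int_\Omega\nabla\phi_z\nabla\phi_{\tilde z}\big)_{z,\tilde z\in\Nin(\T)}$ together with the coefficient vector $\mathbf{W}:=(W(z))_{z\in\Nin(\T)}$, the discrete subharmonicity hypothesis reads exactly $A\mathbf{W}\le 0$ componentwise. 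It therefore suffices to show that $A^{-1}$ exists and has only nonnegative entries, for then $\mathbf{W}=A^{-1}(A\mathbf{W})\le 0$.

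First I would record three structural properties of $A$. It is symmetric by inspection. It is positive definite, because $\mathbf{V}^\top A\mathbf{V}=\|\nabla V\|_{L^2(\Omega)}^2$ for $V=\sum_z V_z\phi_z\in\V(\T)$, and the Poincar\'e inequality on the bounded domain $\Omega$ makes this quantity strictly positive whenever $\mathbf{V}\neq 0$; in particular $A$ is invertible. Finally, weak acuteness of $\T$ gives $A_{z\tilde z}=\int_\Omega\nabla\phi_z\nabla\phi_{\tilde z}\le 0$ for $z\neq\tilde z$. Thus $A$ is a Stieltjes matrix (a symmetric M-matrix).

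The crux is then the purely algebraic fact that a Stieltjes matrix has a nonnegative inverse, which I would establish by a Neumann series. Put $s:=\lambda_{\max}(A)>0$ and $B:=sI-A$. Since the diagonal entries of a symmetric positive semidefinite matrix never exceed its largest eigenvalue and the off-diagonal entries of $A$ are nonpositive, $B$ is entrywise nonnegative; and the eigenvalues of $B$ are $s-\lambda_k(A)\in[0,\,s-\lambda_{\min}(A)]$, so $\rho(B)=s-\lambda_{\min}(A)<s$. Hence $A^{-1}=s^{-1}(I-s^{-1}B)^{-1}=s^{-1}\sum_{k\ge 0}(s^{-1}B)^k$ is a convergent series of entrywise nonnegative matrices, whence $A^{-1}\ge 0$. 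Combined with $A\mathbf{W}\le 0$ this yields $\mathbf{W}\le 0$, i.e.\ $W\le 0$ at every node, hence $W\le 0$ on $\bar\Omega$, and together with $\max_{\partial\Omega}W=0$ we conclude $\max_{\bar\Omega}W=\max_{\partial\Omega}W$.

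The only genuinely nonroutine ingredient is the nonnegativity of the inverse of a Stieltjes matrix; the rest is bookkeeping. An alternative, more elementary route avoids this lemma: one tests the subharmonicity inequality with $\Phi=\sum_{z\in S}\phi_z$, where $S$ is the set of nodes at which $W$ attains its maximum $M$, and uses the partition of unity $\sum_z\phi_z\equiv 1$ (equivalently, the vanishing row sums of the full stiffness matrix) together with the sign conditions to deduce $A_{z\tilde z}(W(z)-M)=0$ whenever $\tilde z\in S$; this forces $S$ to be closed under mesh \emph{adjacency}, and connectedness of $\T$ --- inherited from that of $\Omega$ --- then forces $S$ to touch $\partial\Omega$ unless $W\equiv M$. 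There the delicate point is that adjacency must mean pairs of nodes with $A_{z\tilde z}<0$ \emph{strictly}, which is exactly where weak (as opposed to strict) acuteness requires care; the matrix argument sidesteps this, and extends with only minor modifications to continuous piecewise affine functions with nonzero boundary nodal values, using once more $A^{-1}\ge 0$ and the zero row sums.
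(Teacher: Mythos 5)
Your proof is correct. Note first that the paper itself offers no proof of Proposition~\ref{pro:dmp}: it is stated as a classical fact (essentially Ciarlet--Raviart), so there is no in-paper argument to compare against. Your main route --- reformulating discrete subharmonicity as $A\mathbf{W}\le 0$ for the interior stiffness matrix, verifying that $A$ is a Stieltjes matrix (symmetric, positive definite via Friedrichs' inequality on the connected domain, nonpositive off-diagonal by weak acuteness), and then proving $A^{-1}\ge 0$ by the Neumann series for $sI-B$ with $s=\lambda_{\max}(A)$ --- is one of the two standard arguments, and it is the more robust one here: as you observe, it is insensitive to off-diagonal entries that are exactly zero, which is precisely the point where the connectivity argument under merely \emph{weak} acuteness needs extra care. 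Two small remarks. First, the paper's element $\mathcal{P}(T)$ may be $\mathbb{Q}_1$ on cubes, so $W$ is multilinear rather than piecewise affine; the maximum is still attained at a node (a multilinear function on a box is affine in each coordinate separately), so this costs you nothing, but "piecewise affine" is slightly too narrow as written. Second, in the paper the DMP is later invoked on the node sets $\N^{\pm}(\T)$ for functions that do not vanish on the relevant "boundary," so the genuinely useful form is the extension you mention at the end (nonzero boundary nodal values, via $A^{-1}\ge 0$ together with the zero row sums of the full stiffness matrix); it would be worth writing that variant out rather than leaving it as a remark. Also note, for the record, that the paper's displayed definition of weak acuteness reads "for all $z,\tilde z$" but must be understood as $z\ne\tilde z$, which is the reading you correctly adopt.
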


With the help of the DMP we prove pointwise error estimates
between $u$ and $U_\T$. 
This result is essentially due to Baiocchi \cite{Baiocchi} and Nitsche \cite{Nitsche:77}. 
It extends to more general variational inequalities and lower than $W^2_{\infty}(\Omega)$ regularity.
We begin with a simple but fundamental growth estimate.

\begin{lemma}[growth estimate]
\label{lem:quadgrowth}
Assume that $u,\psi \in W^2_\infty(\Omega)$ and let $T \subset \T$. If there is $x_0 \in T$ such that $(u-\psi)(x_0) = 0$, 
then there is a constant $C$ proportional to $|u-\psi|_{W^2_{\infty}(\Omega)}$
such that
\[
  0 \leq (u-\psi)(x) \leq C h_\T^2 \quad \forall x \in T.
\]
%where $C$ is proportional to $|u-\psi|_{W^2_{\infty}(\Omega)}$.
\end{lemma}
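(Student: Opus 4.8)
The plan is to use a second-order Taylor expansion of the difference $w := u - \psi$ around the point $x_0 \in T$ where it vanishes. First I would observe that the hypotheses of the lemma do not immediately give that $x_0$ is a point of contact in the sense of a global extremum of $w$; rather we only know $(u-\psi)(x_0)=0$, while globally $u \ge \psi$, hence $w \ge 0$ everywhere. In particular $x_0$ is a global minimum of $w$, so if $w$ were $C^1$ we would have $\nabla w(x_0) = 0$ — but $x_0$ may lie on $\partial\Omega$, so strictly speaking one should note that the conclusion is a purely local statement on the (closed) cell $T$ and that $w \in W^2_\infty(\Omega) \hookrightarrow C^{1,1}(\bar\Omega)$, so $w$ and $\nabla w$ are genuinely continuous up to the boundary and the minimum of $w$ over $\Omega$ (attained, among other places, at $x_0$) forces $\nabla w(x_0)=0$ when $x_0$ is interior; if $x_0 \in \partial\Omega$ one restricts attention to the fact that $w\ge 0$ still controls the sign.

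The main computation is then elementary: for any $x \in T$, write along the segment from $x_0$ to $x$ (which lies in $T$ since $T$ is convex)
\[
  w(x) = w(x_0) + \nabla w(x_0)\cdot(x - x_0) + \int_0^1 (1-t)\, (x-x_0)^\top D^2 w\big(x_0 + t(x-x_0)\big) (x-x_0)\, \mathrm{d}t.
\]
The first term vanishes by hypothesis. For the second term, the key point — and the step I expect to be the only real obstacle — is to justify that $\nabla w(x_0)\cdot(x-x_0) \le 0$, or more precisely that it can be absorbed. The clean argument: since $w\ge 0$ on $\Omega$ and $w(x_0)=0$, the point $x_0$ minimizes $w$; for $x_0$ in the interior this gives $\nabla w(x_0)=0$ outright. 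This already yields $0 \le w(x) \le \tfrac12 |D^2 w|_{L^\infty(T)} |x-x_0|^2 \le \tfrac12 |u-\psi|_{W^2_\infty(\Omega)}\, h_T^2 \le C h_\T^2$, using $|x-x_0| \le h_T \le h_\T$ and the shape-regularity/quasiuniformity conventions fixed earlier. The lower bound $0 \le w(x)$ is just the global constraint $u \ge \psi$.

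If one wants to allow $x_0 \in \partial T$ possibly on $\partial\Omega$ without invoking $\nabla w(x_0)=0$, the alternative is to note that the only use of the lemma downstream is on cells $T$ meeting the free boundary $\Gamma\subset\Omega$, so $x_0$ can be taken interior; alternatively one extends $w$ suitably. I would simply remark that $w\ge 0$ everywhere together with $C^{1,1}$ regularity and $w(x_0)=0$ forces the gradient term to be nonpositive on $T$ (as $x_0$ is a minimizer), so the Taylor remainder estimate goes through verbatim. Collecting constants, $C = \tfrac12 |u-\psi|_{W^2_\infty(\Omega)}$ works, which is manifestly proportional to $|u-\psi|_{W^2_{\infty}(\Omega)}$ as claimed, completing the proof.
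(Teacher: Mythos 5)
Your proof is correct and follows essentially the same route as the paper: both arguments rest on the observation that $u-\psi\ge 0$ together with $(u-\psi)(x_0)=0$ forces $\nabla(u-\psi)(x_0)=0$, after which the $W^2_\infty$ regularity gives the quadratic bound (the paper bounds $\|\nabla(u-\psi)\|_{L^\infty(T)}\lesssim h_\T$ and applies the mean value theorem, while you use the Taylor expansion with integral remainder --- an equivalent computation). Your additional remarks about the case $x_0\in\partial\Omega$ address a technicality the paper silently ignores, and your resolution (the lemma is invoked only near the free boundary, which is interior) is adequate.
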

\begin{proof}
The fact that $u-\psi \geq 0$ implies $\GRAD(u-\psi)(x_0)=0$. 
% The Sobolev embedding theorem implies that $W^2_\infty(\Omega) \hookrightarrow C^{1,1}(\bar\Omega)$ 
% so that $\| \GRAD(u-\psi)\|_{L^\infty(T)} \leq C h_\T$. 
Since $u-\psi \in W^2_{\infty}(\Omega)$, we deduce
$\| \GRAD(u-\psi)\|_{L^\infty(T)} \leq C h_\T$.
Applying the mean value theorem yields the asserted estimate.
\end{proof}

\begin{theorem}[pointwise error estimates]
If $\T$ is weakly acute and $u,\psi \in W_{\infty}^{2}(\Omega)$, then
there exists a constant $C^* > 0$ depending on
$|u|_{W_{\infty}^{2}(\Omega)}$ and  $|\psi|_{W_{\infty}^{2}(\Omega)}$
such that, with $\eta(h_\T)$ as in \eqref{eq:defofeta}, we have
 \begin{equation}
\label{eq:Linf_estimate}
 \| u - U_{\T} \|_{L^{\infty}(\Omega)} < C^* \eta( h_{\T} ).
\end{equation}
\end{theorem}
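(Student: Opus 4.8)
The plan is to compare $u$ with $U_\T$ through the elliptic projection $R_\T u$ of \eqref{eq:elliptic_proj}, for which Proposition~\ref{prop:Linfty} furnishes $\|u-R_\T u\|_{L^\infty(\Omega)}\le\eta(h_\T)$, and then to estimate the two \emph{discrete} functions $e:=R_\T u-U_\T\in\V(\T)$ and $g:=U_\T-R_\T u=-e$ from above, in each case by applying the DMP of Proposition~\ref{pro:dmp} over a suitable set of nodes. Note that $e$ and $g$ vanish on $\partial\Omega$ since $R_\T u$ and $U_\T$ do. First I would record the nodal relations: testing \eqref{eq:discrete_obstacle} with $V=U_\T+\phi_z\in\mathcal{K}_\T$ (admissible since $\phi_z\ge0$) gives $(\nabla U_\T,\nabla\phi_z)\ge(f,\phi_z)$ for every $z\in\Nin(\T)$, with equality at every \emph{non-contact node}, i.e. $z$ with $U_\T(z)>(I_\T\psi)(z)=\psi(z)$, by also testing with $U_\T-\varepsilon\phi_z$ for small $\varepsilon>0$; on the other hand $(\nabla R_\T u,\nabla\phi_z)=(\nabla u,\nabla\phi_z)=(f+\lambda,\phi_z)$ by \eqref{eq:elliptic_proj} and \eqref{eq:complementary_system}.

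For the bound $u-U_\T\le C^*\eta(h_\T)$, since $u-U_\T=(u-R_\T u)+e$, it suffices to show $e\lesssim\eta(h_\T)$. Subtracting the nodal relations gives $(\nabla e,\nabla\phi_z)\le(\lambda,\phi_z)$ for all $z\in\Nin(\T)$. As $\lambda\ge0$ is supported in $\Lambda$, the function $e$ is discrete subharmonic at every interior node $z$ whose star $\mathrm{supp}\,\phi_z$ does not meet $\Lambda$; hence, applying Proposition~\ref{pro:dmp} over the complement of the set $N_\Lambda$ of nodes whose star meets $\Lambda$, together with $e=0$ on $\partial\Omega$, we get $\max_{\bar\Omega}e\le\max\{0,\max_{z\in N_\Lambda}e(z)\}$. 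Finally, if $z\in N_\Lambda$ there is a cell $T\ni z$ with $\bar T\cap\Lambda\neq\emptyset$, so $u=\psi$ at some point of $T$ and Lemma~\ref{lem:quadgrowth} yields $0\le(u-\psi)(z)\le C h_\T^2$; combining this with $U_\T(z)\ge\psi(z)$ and $R_\T u(z)\le u(z)+\eta(h_\T)$ gives $e(z)=R_\T u(z)-U_\T(z)\le C h_\T^2+\eta(h_\T)\lesssim\eta(h_\T)$, using $h_\T^2\lesssim\eta(h_\T)$ for $h_\T$ small.

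For the reverse bound $U_\T-u\le C^*\eta(h_\T)$, since $U_\T-u=g+(R_\T u-u)$, it suffices to show $g\lesssim\eta(h_\T)$. At a non-contact node $z$ the nodal relations give $(\nabla g,\nabla\phi_z)=(f,\phi_z)-(f+\lambda,\phi_z)=-(\lambda,\phi_z)\le0$, so $g$ is discrete subharmonic there; at a \emph{contact node} $z$, where $U_\T(z)=\psi(z)$, we have $g(z)=\psi(z)-R_\T u(z)\le\psi(z)-u(z)+\eta(h_\T)\le\eta(h_\T)$ because $u\ge\psi$; and $g=0$ on $\partial\Omega$. Applying Proposition~\ref{pro:dmp} over the complement of the contact nodes then gives $\max_{\bar\Omega}g\le\eta(h_\T)$. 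Combining the two estimates yields $\|u-U_\T\|_{L^\infty(\Omega)}\le C^*\eta(h_\T)$ with $C^*$ depending on $|u|_{W^2_\infty(\Omega)}$ and $|\psi|_{W^2_\infty(\Omega)}$ (the latter through Lemma~\ref{lem:quadgrowth}); the strict inequality in \eqref{eq:Linf_estimate} follows by enlarging $C^*$.

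The step I expect to be the main obstacle is the careful use of the discrete maximum principle on a proper subset of nodes: Proposition~\ref{pro:dmp} is stated globally, so one must argue — via the weighted-average form of discrete subharmonicity on a weakly acute mesh and the connectedness of $\Omega$ — that a function which is discrete subharmonic only at the nodes \emph{outside} a given set attains its maximum either on $\partial\Omega$ or at a node of that set, and, in the first estimate, that ``the star of $z$ meets $\Lambda$'' is precisely the obstruction to subharmonicity, so that Lemma~\ref{lem:quadgrowth} applies exactly where it is needed. The degenerate cases ($\Lambda=\emptyset$, an empty discrete contact set, or $|u|_{W^2_\infty(\Omega)}=0$) reduce either to the linear estimate of Proposition~\ref{prop:Linfty} or are trivial, and are handled separately.
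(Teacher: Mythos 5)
Your proof is correct and follows essentially the same route as the paper: comparison with the elliptic projection via Proposition~\ref{prop:Linfty}, the discrete maximum principle applied only at the nodes where discrete subharmonicity of $\pm(R_{\T}u-U_{\T})$ holds, and Lemma~\ref{lem:quadgrowth} at the exceptional nodes near the contact set. The paper merely packages the same estimates as explicit barriers $R_{\T}u\pm C_i\,\eta(h_{\T})$ together with a contradiction argument on the node sets where they fail to dominate $U_{\T}$, which is precisely your localized DMP; the mild extension of Proposition~\ref{pro:dmp} that you flag as the main obstacle is invoked by the paper in the same implicit way.
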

\begin{proof} The proof relies on the construction of discrete super and subsolutions to \eqref{eq:discrete_obstacle}.

\mbox{1}. \emph{Supersolution:} Let $U_{\T}^{+} := R_{\T} u + C_1
\eta(h_{\T})$ with $C_1 > 1$ to be determined. We have
\[
U_{\T}^+ \geq u - \eta(h_{\T}) + C_1 \eta(h_{\T})
 = (u-\psi) + \left( \psi - I_{\T} \psi + (C_1 - 1) \eta(h_{\T}) \right ) + I_{\T} \psi,
\]
where $I_{\T}$ denotes the Lagrange interpolation operator. The fact that $\psi \in W_{\infty}^2(\Omega)$, implies the interpolation estimate
$
 \| \psi - I_{\T} \psi \|_{L^{\infty}(\Omega)} \lesssim h_{\T}^{2} |\psi |_{W_{\infty}^2(\Omega)}
$
which, together with $u \geq \psi$, yields 
$U^{+}_{\T} \geq I_{\T} \psi$ for $C_1$ sufficiently large
depending on $|\psi|_{W_{\infty}^2(\Omega)}$.

For $z \in \Nin(\T)$ set $v = u + \phi_z \in \mathcal{K}$ in \eqref{eq:class_obstacle} to deduce
\[
 \int_{\Omega} \nabla U_{\T}^+ \nabla \phi_{z} \geq \int_{\Omega} f \phi_{z} =: F_{z} \qquad \forall z \in \Nin(\T),
\]
where we used that $(\nabla R_\T u, \nabla \phi_z) = (\nabla u, \nabla \phi_z)$ and that $\nabla U_\T^+ = \nabla R_\T u$.
To prove that $U_{\T}^{+} \geq U_{\T}$, we argue by contradiction. Let 
\[
\N^{+}(\T):= \{ z \in \N(\T): U^{+}_{\T}(z) < U_{\T} (z) \},
\]
and note that $\N^{+}(\T) \subset \Nin(\T)$ because $C_1 > 1$. We thus have $\psi(z) \leq U_{\T}^{+}(z) < U_{\T}(z)$ for all 
$z \in \N^{+}(\T)$, whence
$
 \int_{\Omega} \nabla U_{\T} \nabla \phi_{z} = F_z
$
for all $ z \in \N^{+}(\T)$, according to \eqref{eq:discrete_obstacle}. Consequently 
\[
 \int_{\Omega} \nabla(U_{\T} - U^{+}_{\T}) \nabla \phi_{z} \leq 0 \qquad \forall z \in \N^{+}(\T).
\]
Applying the DMP of Proposition \ref{pro:dmp} to $\N^{+}(\T)$ we infer that
$(U_{\T} - U^{+}_{\T} ) (z) \leq 0$ for all $z \in \N^{+}(\T)$ which contradicts the definition of $\N^{+}(\T)$. Hence
\begin{equation}
\label{aux_inf_1}
 U_{\T} \leq U^{+}_{\T} \leq u + (C_1 + 1 )\eta(h_{\T}),
\end{equation}
where we used the pointwise estimate of Proposition~\ref{prop:Linfty}.

\mbox{2} \emph{Subsolution}. Let $U_{\T}^{-}:= R_{\T} u - C_2 \eta(h_{\T})$ for a suitable constant $C_2 > 1$. 
We again argue by contradiction to prove that $U^{-}_{\T} \leq U_{\T}$. Let
\[
 \N^{-}(\T):= \{ z \in \N(\T): U_{\T}^{-}(z) > U_{\T}(z)\},
\]
and note that $\N^{-}(\T) \subset \Nin(\T)$ because $C_2>1$. 
We have that for each $z \in \N^{-}(\T)$
\[
 \psi(z)\leq U_{\T}(z) < U^{-}_{\T}(z) \leq u(z) - (C_2 - 1 )\eta(h_{\T}).
\]
This implies that $u > \psi$ in $\mathrm{supp}~\phi_{z}$, for otherwise we can apply the quadratic growth estimate of Lemma~\ref{lem:quadgrowth} to obtain
\[
 (C_2 - 1) \eta(h_{\T}) \leq (u - \psi)(z) \leq C \eta(h_{\T}),
\]
which is a contradiction for $C_2$ sufficiently large depending
$|u|_{W_{\infty}^2(\Omega)}$ and $|\psi|_{W_{\infty}^2(\Omega)}$.

Using \eqref{eq:class_obstacle} we get $\int_{\Omega} \nabla u \nabla \phi_z = F_z$
for all $z \in \N^{-}(\T)$. Combining this with \eqref{eq:discrete_obstacle} we derive
\[
 \int_{\Omega} \nabla(U_{\T}^{-} - U_{\T}) \nabla \phi_{z} \leq 0 \qquad z \in \N^{-}(\T),
\]
whence applying Proposition \ref{pro:dmp} (DMP) we deduce $U_{\T}^{-}(z) \leq U_{\T}(z)$ for all $z \in \N^{-}(\T)$. This contradicts the definition
of $\N^{-}(\T)$ and yields
\begin{equation}
\label{aux_inf_2}
 U_{\T} \geq U_{\T}^{-} \geq u - (C_2 + 1) \eta(h_{\T}).
\end{equation}

\mbox{3.} The desired estimate \eqref{eq:Linf_estimate} is finally a consequence of \eqref{aux_inf_1} and \eqref{aux_inf_2}.
\end{proof}

%%%%%%%%%%%%%%%%%%%%%%%%%%%%%%%%%%%%%%%%%%%%%%%%%%
\subsection{Error estimates for free boundaries}
\label{subsec:free_boundary}
%%%%%%%%%%%%%%%%%%%%%%%%%%%%%%%%%%%%%%%%%%%%%%%%%%
With the change of variables $v=u-\psi$ we can assume that the obstacle $\psi = 0$. 
In this setting let us address the approximation of the free boundary $ \Gamma = \partial \Omega^{+} \cap \Omega $. 
We will follow the work of Nochetto \cite{Nochetto:86}, 
which is in turn inspired by the results
of Brezzi and Caffarelli \cite{BC:83}. We present an elementary procedure to 
determine a discrete interface together with sharp convergence rates both in measure and in distance. 
The key ingredients are the so-called \emph{nondegenerancy properties} (NDP) of the obstacle problem 
\eqref{eq:class_obstacle} \cite{Caffarelli:81,Friedman} and the
definition of the discrete free boundary $\Gamma_\T$ and
non-coincidence set $\Omega^+_{\T}$ \cite{Nochetto:86}:
for a parameter $\delta_{\T} > 0$ to be properly selected, we set
\begin{equation}\label{eq:free-bd}
\Gamma_\T := \partial\Omega_\T^+ \cap \Omega,
\qquad
 \Omega^+_{\T}:= \{ x \in \Omega: U_{\T} > \delta_{\T} \}.
\end{equation}

Let us recall the NDP for the obstacle problem \eqref{eq:class_obstacle}. Assume 
$f \in W^1_{\infty}(\Omega)$ and that there is a negative constant $C(f)$ for which
\begin{equation}
\label{eq:f_C(f)}
f \leq C(f) < 0 \textrm{ in } \Omega. 
\end{equation}
Since $f \in W^1_{\infty}(\Omega)$, we have 
$ u \in C^{1,1}(\Omega) \cap W^{s}_p(\Omega)$ for $1 < p < \infty$ and
$s < 2 + \frac{1}{p}$ \cite{Friedman, KS}, and
pointwise regularity $u \in W^2_{\infty}(\Omega)$ up to the fixed
boundary $\partial \Omega$, 
provided $\partial \Omega \in C^{2,\alpha}$ \cite{Friedman, KS}. 

We define 
$
 A(u, \epsilon):= \{ x \in \Omega: 0 < u(x) < \epsilon^2 \}
$,
which clearly depends on how the solution $u$ behaves near the free boundary $\Gamma$, and
$
  \Ss(\Gamma,\epsilon):= \{ x \in \Omega^{+}: \textrm{dist}(x,\Gamma) < \epsilon \}
$.
With this notation at hand,
we present the following NDP properties \cite{Caffarelli:81,Friedman}.

\begin{lemma}[local NDP]
Let $f \in W^1_{\infty}(\Omega)$. If \eqref{eq:f_C(f)} holds, then the set $A(u,\epsilon)$
satisfies
\begin{equation}
\label{NDP_measure}
 |A(u,\epsilon) \cap K| \leq C \epsilon \qquad \forall K \Subset \Omega \qquad \mathrm{(NDP~in~measure)}
\end{equation}
where $C = C(K, \| u \|_{W^2_{\infty}(K)}, \| f \|_{W^1_{\infty}(K)})$.
In addition, we have
\begin{equation}
\label{NDP_distance}
 A(u,\epsilon) \cap K \subset \Ss(\Gamma,\epsilon) \cap K 
\qquad \forall K \Subset \Omega
\qquad \mathrm{(NDP~in~distance)}.
\end{equation}
\end{lemma}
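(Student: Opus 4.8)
The plan is to reduce everything to the scalar function $u$ (recall $\psi=0$ after the change of variables) and exploit the complementarity system together with the sign condition \eqref{eq:f_C(f)}. I would first prove the distance statement \eqref{NDP_distance}, since it is essentially a soft consequence of interior regularity, and then bootstrap to the measure statement \eqref{NDP_measure}.

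For \eqref{NDP_distance}: fix $K\Subset\Omega$ and let $x\in A(u,\epsilon)\cap K$, so $0<u(x)<\epsilon^2$. Since $u>0$ at $x$, certainly $x\in\Omega^+$; I must show $\mathrm{dist}(x,\Gamma)<\epsilon$. Suppose not, so the ball $B_\epsilon(x)$ lies in $\overline{\Omega^+}\cap\Omega$; on this ball the complementarity system gives $-\Delta u=f\le C(f)<0$. I would then invoke the quantitative growth-away-from-the-free-boundary estimate which is the heart of the classical NDP of Caffarelli: if $-\Delta u\le C(f)<0$ on $B_r(x)$ and $u\ge0$ there, then $u(x)\ge c\,r^2$ with $c$ proportional to $|C(f)|$ (one compares $u$ with the explicit subsolution $w(y)=\frac{|C(f)|}{2n}(r^2-|y-x|^2)$, which vanishes on $\partial B_r(x)$, satisfies $-\Delta w=C(f)$, hence $u\ge w$ on $B_r$ by the maximum principle, giving $u(x)\ge w(x)=\frac{|C(f)|}{2n}r^2$). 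Taking $r=\epsilon$ would force $u(x)\ge c\epsilon^2$, and by choosing the constant in the definition appropriately (or absorbing it into $C$) this contradicts $u(x)<\epsilon^2$. Hence $\mathrm{dist}(x,\Gamma)<\epsilon$, which is \eqref{NDP_distance}.

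For \eqref{NDP_measure}: by \eqref{NDP_distance} it suffices to bound $|\Ss(\Gamma,\epsilon)\cap K|$, i.e. the Lebesgue measure of an $\epsilon$-tubular neighborhood of $\Gamma$ inside $\Omega^+$. The key point is that $\Gamma$ has locally finite $(n-1)$-dimensional Hausdorff measure — indeed, under $f\in W^1_\infty(\Omega)$ and \eqref{eq:f_C(f)}, the classical regularity theory of Caffarelli guarantees that $\Gamma\cap K$ is, up to a lower-dimensional singular set, a $C^{1,\alpha}$ hypersurface and in particular $\mathcal{H}^{n-1}(\Gamma\cap K')<\infty$ for $K\Subset K'\Subset\Omega$. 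The Minkowski-content / covering estimate then yields $|\Ss(\Gamma,\epsilon)\cap K|\le C\,\mathcal{H}^{n-1}(\Gamma\cap K')\,\epsilon$ for $\epsilon$ small, with $C$ depending only on $K,K'$ and the relevant regularity norms. Combining the two displays gives \eqref{NDP_measure}.

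The main obstacle is the measure estimate \eqref{NDP_measure}: it genuinely requires the $\mathcal{H}^{n-1}$-finiteness (equivalently, the finite perimeter / Minkowski content) of the free boundary, which is not elementary and rests on Caffarelli's regularity theory — this is exactly why the lemma is attributed to \cite{Caffarelli:81,Friedman} and why the excerpt flags that "details are beyond the scope of this paper." The distance estimate \eqref{NDP_distance}, by contrast, is the soft part and follows cleanly from the barrier comparison sketched above. An alternative route to \eqref{NDP_measure} that avoids invoking free-boundary regularity directly is to combine the lower growth bound (proved via the barrier) with the upper bound $0\le u\le C\,\mathrm{dist}(\cdot,\Gamma)^2$ coming from $u\in C^{1,1}$ and $\nabla u=0$ on $\Gamma$ (Lemma~\ref{lem:quadgrowth} in discrete form, here in the continuous setting), and then carry out a Vitali covering of $A(u,\epsilon)$ by balls of radius $\sim\epsilon$ on which $u$ is comparable to $\epsilon^2$; controlling the overlap of these balls still ultimately needs a density/measure bound on the free boundary, so the difficulty is not really circumvented. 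I would therefore present \eqref{NDP_distance} in detail and cite \cite{Caffarelli:81,Friedman} for \eqref{NDP_measure}.
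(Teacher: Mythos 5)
The paper itself offers no proof of this lemma: it is quoted verbatim from \cite{Caffarelli:81,Friedman}, and the discussion immediately following it warns that the NDP in distance ``is more intricate in that it entails a pointwise behavior of $u$ near $\Gamma$.'' Your proposal inverts that assessment, and the part you work out in detail contains a genuine error: the maximum-principle comparison goes the wrong way. On $B_r(x)\subset\Omega^+$ one has $-\Delta u=f\le C(f)<0$, i.e.\ $\Delta u\ge|C(f)|>0$, so $u$ is \emph{sub}harmonic there, while your barrier satisfies $-\Delta w=|C(f)|>0$ (not $C(f)$), so $w$ is \emph{super}harmonic. Hence $u-w$ is subharmonic, and from $u-w\ge 0$ on $\partial B_r(x)$ the maximum principle controls the maximum of $u-w$, not its minimum: you cannot conclude $u\ge w$ in the interior. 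What this comparison actually yields is the classical nondegeneracy $\sup_{\partial B_r(x)}u\ge u(x)+\tfrac{|C(f)|}{2n}r^2$, a lower bound on the \emph{supremum} of $u$ over the ball, which is much weaker than the pointwise bound $u(x)\ge c\,r^2$ your contradiction argument requires. Since \eqref{NDP_distance} is exactly equivalent to the pointwise growth $u(x)\gtrsim \mathrm{dist}(x,\Gamma)^2$ on $\Omega^+\cap K$, this is the deep part of the lemma, and it does not follow from a one-line barrier.

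Your route to \eqref{NDP_measure} is also problematic on two counts. First, finiteness of $\mathcal{H}^{n-1}(\Gamma\cap K')$ does not by itself imply the tubular-neighborhood bound $|\Ss(\Gamma,\epsilon)\cap K|\lesssim\epsilon$; that requires control of the upper $(n-1)$-Minkowski content, which may strictly exceed the Hausdorff measure. Second, the logic is circular with respect to the source you would invoke: in \cite{Caffarelli:81} the estimate $|\{0<u<\epsilon^2\}\cap K|\le C\epsilon$ is proved \emph{first} --- directly from $\Delta u\ge|C(f)|$ in $\Omega^+$, the gradient bound $|\nabla u|\le(2\|D^2u\|_{L^\infty})^{1/2}\epsilon$ on $A(u,\epsilon)$ (valid for any nonnegative $C^{1,1}$ function), and a covering argument --- and the finite Minkowski content of $\Gamma$ is then \emph{deduced} from it, not the other way around. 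If the intent is to defer to the literature, the accurate statement is that both \eqref{NDP_measure} and \eqref{NDP_distance} are quoted from \cite{Caffarelli:81,Friedman}, with \eqref{NDP_distance}, not \eqref{NDP_measure}, being the harder of the two.
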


A NDP thus prescribes a precise growth of $u$ away from $\Gamma$ and, as a result, 
$u$ cannot be uniformly flat near $\Gamma$. A local NDP in measure was first established by Caffarelli in 
\cite{Caffarelli:81} under the qualitative property \eqref{eq:f_C(f)} and 
was exploited to derive regularity properties of the free boundary \cite{Caffarelli:81,Friedman}. 
A global NDP in measure can be derived as soon as $u \in W^2_{\infty}(\Omega)$. On the other hand, a NDP in distance
is more intricate in that it entails a pointwise behavior of $u$ near $\Gamma$.
Let us now obtain an error estimate for the free boundaries \cite{Nochetto:86}. 

\begin{theorem}[interface error estimates]
Define $\delta_{\T}:=C^{*}\eta(h_\T)$ with $C^{*}$ 
given in \eqref{eq:Linf_estimate}. Then
\begin{equation}
\label{eq:rate_measure}
  \left| \left(\Omega^{+} \vartriangle  \Omega_{\T}^{+}\right) \cap K
  \right| \leq C \eta(h_{\T})^{1/2}
\qquad \forall K \Subset \Omega.
\end{equation}
If $\partial \Omega \in C^{2,\alpha}$ then one can set $K = \Omega$. 
In addition, we have
\begin{equation}
\label{eq:rate_distance}
 \Gamma_{\T} \cap K \subset \Ss\left( \Gamma,  \sqrt{2\eta(h_{\T})}
 \right) 
\qquad \forall K \Subset \Omega.
\end{equation}
\end{theorem}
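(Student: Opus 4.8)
The plan is to combine the pointwise error estimate $\|u - U_\T\|_{L^\infty(\Omega)} < C^*\eta(h_\T)$ with the nondegeneracy properties, exactly in the spirit of Nochetto \cite{Nochetto:86}. Throughout we work with $\psi = 0$, so $\Omega^+ = \{u > 0\}$ and $\Omega_\T^+ = \{U_\T > \delta_\T\}$ with $\delta_\T = C^*\eta(h_\T)$; write $\epsilon := \sqrt{2\eta(h_\T)}$ so that $\epsilon^2 = 2\eta(h_\T) > 2\delta_\T$ for $C^* $ normalized appropriately (or simply carry the constant).

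First I would establish the two one-sided set inclusions that drive the symmetric-difference estimate. On one hand, if $x \in \Omega_\T^+ \setminus \Omega^+$, then $u(x) = 0$ while $U_\T(x) > \delta_\T$, so $|u(x) - U_\T(x)| > \delta_\T$; but this cannot happen since $\|u - U_\T\|_{L^\infty} < C^*\eta(h_\T) = \delta_\T$. Hence $\Omega_\T^+ \setminus \Omega^+ = \emptyset$, i.e. $\Omega_\T^+ \subset \Omega^+$. On the other hand, if $x \in \Omega^+ \setminus \Omega_\T^+$, then $u(x) > 0$ and $U_\T(x) \le \delta_\T$, so $0 < u(x) \le U_\T(x) + \|u - U_\T\|_{L^\infty} < \delta_\T + C^*\eta(h_\T) = 2\delta_\T \le \epsilon^2$; thus $x \in A(u,\epsilon)$. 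Therefore $\Omega^+ \vartriangle \Omega_\T^+ = \Omega^+ \setminus \Omega_\T^+ \subset A(u,\epsilon)$. Intersecting with $K \Subset \Omega$ and invoking the NDP in measure \eqref{NDP_measure}, $|(\Omega^+ \vartriangle \Omega_\T^+) \cap K| \le |A(u,\epsilon)\cap K| \le C\epsilon = C\sqrt{2\eta(h_\T)} = C\eta(h_\T)^{1/2}$, which is \eqref{eq:rate_measure}. When $\partial\Omega \in C^{2,\alpha}$ the regularity $u \in W^2_\infty(\Omega)$ holds up to $\partial\Omega$, so the NDP in measure is global and one may take $K = \Omega$.

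For \eqref{eq:rate_distance} I would argue that a point of the discrete free boundary is close to a point of $A(u,\epsilon)$ and then apply the NDP in distance. Let $x \in \Gamma_\T \cap K = \partial\Omega_\T^+ \cap \Omega \cap K$. By definition of $\Omega_\T^+$ and continuity of $U_\T$, $U_\T(x) = \delta_\T$, hence $0 < u(x) \le U_\T(x) + \|u-U_\T\|_{L^\infty} < 2\delta_\T \le \epsilon^2$, so in fact $x \in A(u,\epsilon) \subset \Omega^+$; moreover $x$ lies in the closure $\overline{\Omega_\T^+} \subset \overline{\Omega^+}$, and since $u(x) > 0$ we get $x \in \Omega^+$ itself. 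Now \eqref{NDP_distance} gives $x \in A(u,\epsilon)\cap K \subset \Ss(\Gamma,\epsilon)\cap K$, i.e. $\mathrm{dist}(x,\Gamma) < \epsilon = \sqrt{2\eta(h_\T)}$, which is precisely \eqref{eq:rate_distance}.

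The routine parts are the interpolation/truncation bookkeeping with the constant $C^*$ — one has to be a little careful that $\delta_\T$ and $\epsilon^2$ are chosen so that $u(x) < \epsilon^2$ follows cleanly from $\|u-U_\T\|_{L^\infty} < C^*\eta(h_\T)$ and $U_\T(x) \le \delta_\T$, which forces $\epsilon^2 \ge 2C^*\eta(h_\T)$; taking $\epsilon = \sqrt{2C^*\eta(h_\T)}$ (and absorbing $\sqrt{C^*}$ into the constant) does the job, or one simply rescales $C^*$. The genuine content is entirely borrowed: the strict pointwise bound \eqref{eq:Linf_estimate} and the two NDP statements \eqref{NDP_measure}–\eqref{NDP_distance}. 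The main obstacle, such as it is, is organizational rather than mathematical: making sure the \emph{strict} inequality in \eqref{eq:Linf_estimate} is used to kill $\Omega_\T^+ \setminus \Omega^+$ exactly, and that the factor-of-two slack between $\delta_\T$ and $\epsilon^2$ is tracked consistently in both parts so the same $\epsilon$ serves for the measure estimate and the distance estimate.
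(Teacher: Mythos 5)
Your proposal is correct and follows essentially the same route as the paper: the strict pointwise bound \eqref{eq:Linf_estimate} kills $\Omega_\T^+\setminus\Omega^+$, the inclusion $\Omega^+\setminus\Omega_\T^+\subset A(u,\epsilon)$ with $\epsilon^2=2\delta_\T$ plus the NDP in measure gives \eqref{eq:rate_measure}, and $U_\T=\delta_\T$ on $\Gamma_\T$ plus the NDP in distance gives \eqref{eq:rate_distance}. The only cosmetic difference is that the paper splits $\Omega^+\setminus\Omega_\T^+$ into the two pieces $A\cup B$ and shows $|B|=0$, whereas you prove the single inclusion directly; your remark about tracking the factor between $\delta_\T$ and $\epsilon^2$ matches the paper's own (slightly loose) constant bookkeeping.
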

\begin{proof}
Since $\Omega^{+} \vartriangle  \Omega_{\T}^{+} = (\Omega^{+} \setminus \Omega_{\T}^{+})  \cup (\Omega_{\T}^{+} \setminus \Omega^{+})$, we analyze each set separately. 
We start by expressing $\Omega^{+} \setminus \Omega_{\T}^{+}$ as follows:
\begin{align*}
\Omega^{+} \setminus \Omega_{\T}^{+} &= \{x \in \Omega: 0 < u(x) < 2 \delta_{\T} ~\mathrm{and}~U_{\T}(x) \leq \delta_{\T}\} 
\\
& \cup \{x \in \Omega: u(x) \geq 2 \delta_{\T} ~\mathrm{and}~U_{\T}(x) \leq \delta_{\T}\} = A \cup B.
\end{align*} 
Since $A \subset \{x \in \Omega: 0 < u(x) < 2 \delta_{\T} \}$, the NDP in measure \eqref{NDP_measure} immediately implies 
$|A\cap K| \leq C(2 \delta_{\T})^{1/2}$ for all $K \Subset \Omega$. To estimate $|B|$, we first note that if $x \in B$ then
$u(x) - U_{\T}(x) \geq \delta_{\T}$. 
Since the pointwise estimate \eqref{eq:Linf_estimate} yields 
$u(x) - U_{\T}(x) < C^* \eta_{\T} = \delta_\T$ for almost every $x$,
we deduce $|B|=0$. We thus arrive at
\[
 \left|\left(\Omega^{+} \setminus \Omega_{\T}^{+}\right) \cap K \right| \leq C \delta_{\T}^{1/2} .
\]
Since 
$
  \Omega_\T^+ \setminus \Omega^+ = \left\{ x \in \Omega: 
u(x) \leq 0 ~\mathrm{and}~ U_\T(x) > \delta_\T \right\},
$
we argue as with $B$ to get $|\Omega_{\T}^{+}\setminus\Omega^{+}|=0$.

We now proceed to obtain \eqref{eq:rate_distance}. If $x \in
\Gamma_{\T}$ then $U_{\T}(x) = \delta_{\T}$, whence estimate \eqref{eq:Linf_estimate}
implies
\[
  0 = U_{\T}(x) - C^*\eta(h_{\T}) < u(x) < U_{\T}(x) + C^* \eta(h_{\T}) 
= 2 \delta_{\T},
\]
so that $x \in A(u,2\delta_\T)$. The inclusion \eqref{eq:rate_distance} now follows from the NDP in distance \eqref{NDP_distance}.
\end{proof}

%%%%%%%%%%%%%%%%%%%%%%%%%%%%%%%%%%%%%%%%%%%%%%%%%%
\section{The thin obstacle problem}
\label{sec:thin_obstacle}
%%%%%%%%%%%%%%%%%%%%%%%%%%%%%%%%%%%%%%%%%%%%%%%%%%
To simplify the discussion we assume that $\Omega$ is convex and replace the differential operators in 
\eqref{eq:class_obstacle} and \eqref{eq:complementary_system} by $-\Delta + I$. The admissible set $\mathcal K$ is replaced by
\[
\mathcal{K}:= \{v \in H^1(\Omega): v \geq g \textrm{ a.e. on } \partial \Omega   \}. 
\]
If $\partial \Omega \in C^{1,1}$ and $g \in C^{1,1/2}(\partial \Omega)$, then
the corresponding variational inequality \eqref{eq:class_obstacle} has a unique solution
$u \in H^2(\Omega) \cap C^{1,1/2}(\bar{\Omega})$; the H\"{o}lder regularity is due to 
Athanasopoulos and Caffarelli \cite{AC:04}. With this regularity $-\Delta u + u =f$ a.e.~$\Omega$
and, if $z = \partial_{\nu} u$ on $\partial \Omega$, the following complementarity system is valid, which
is also known as \emph{Signorini complementarity conditions}:
\begin{equation}
\label{eq:thin_complementary_system}
  z \geq 0, \quad u - g \geq 0, \quad z(u - g) = 0 \quad \textrm{on } \partial \Omega;
\end{equation}
see \cite[Chap. 9]{PSU:12}. Since we do not assume $\Omega$ to be a polyhedral domain, we consider a family $\{ \T  \}$ of triangulations of polyhedral domains $\Omega_{\T}$ that approximate $\Omega$ in such a way that $\N(\T) \subset \bar \Omega_{\T} $, $\N(\T) \cap \partial \Omega_{\T} \subset \partial \Omega$ and $|\Omega\setminus\Omega_\T| \lesssim h_\T^2$. Since $\Omega$ is convex we have that $\Omega_\T \subset \Omega$ and we extend discrete functions $V \in \V(\T)$ to $\bar\Omega \setminus \Omega_{\T}$ by a constant in the direction normal to $\partial \Omega_{\T}$. We define the discrete admissible set by
\begin{equation}
\label{eq:KdiscreteThin}
\mathcal{K}_{\T}:= \{ W \in \V(\T): W \geq I_{\T} g \textrm{ on } \partial \Omega_{\T} \}.
\end{equation}
We present an optimal energy error estimate valid for any dimension and without assumptions on
the free boundary. This improves upon the original result by Brezzi, Hager and Raviart \cite{BHR:77}.
Alternative results in this direction, for $n=2$, can be found in \cite{MR3022224} and references therein.

\begin{theorem}[optimal energy error estimate]
Let $\Omega$ be convex, $C_\Omega>0$ denote the
$C^{1,1}$-seminorm of $\partial\Omega$ and $g \in
C^{1,1/2}(\partial \Omega)$. Then there is $C>0$, that
depends on $|u|_{H^2(\Omega)}$, $|u|_{W^1_\infty(\Omega)}$,
$|u|_{C^{1,1/2}(\bar\Omega)}$, $|g|_{C^{1,1/2}(\partial \Omega)}$,
and $C_\Omega$, for which
\begin{equation}
\label{eq:thin_estimate}
\| u - U_{\T} \|_{H^1(\Omega)} \leq C h_{\T}.
\end{equation}
\end{theorem}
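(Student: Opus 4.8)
The plan is to adapt the energy-norm argument of Theorem~\ref{TH:H1_opt_est} to the Signorini setting, where the obstacle lives on the boundary and the bilinear form is $a(v,w):=(\nabla v,\nabla w)+(v,w)$. Writing $e:=u-U_\T$, I would start from the identity
\[
a(e,e)=a(e,u-I_\T u)+a(e,I_\T u-U_\T),
\]
bound the first term by $\tfrac12 a(e,e)+\tfrac12 a(u-I_\T u,u-I_\T u)$, and on the second term integrate by parts. Since $u\in H^2(\Omega)$ solves $-\Delta u+u=f$ a.e., the interior contribution collapses to $(f,I_\T u-U_\T)$, leaving a boundary term $\langle z, I_\T u-U_\T\rangle_{\partial\Omega}$ with $z=\partial_\nu u\ge 0$. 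Testing the discrete inequality \eqref{eq:discrete_obstacle} with $V=I_\T u$ (which lies in $\mathcal K_\T$ because $u\ge g$ on $\partial\Omega$ forces $I_\T u\ge I_\T g$ on $\partial\Omega_\T$, using that the nodes of $\partial\Omega_\T$ lie on $\partial\Omega$) gives the analogue of $a(e,I_\T u-U_\T)\le\langle z,I_\T u-U_\T\rangle_{\partial\Omega}$, and we are reduced to estimating that boundary term.

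The crux is therefore the boundary term, which I would split exactly as in \eqref{aux:rhs}:
\[
\langle z,I_\T u-U_\T\rangle_{\partial\Omega}
=\langle z,I_\T(u-g)-(u-g)\rangle_{\partial\Omega}
+\langle z,u-g\rangle_{\partial\Omega}
+\langle z,I_\T g-U_\T\rangle_{\partial\Omega}.
\]
The middle term vanishes by the complementarity relation $z(u-g)=0$ in \eqref{eq:thin_complementary_system}. The last term is $\le 0$ because $z\ge0$ and $U_\T\ge I_\T g$ on $\partial\Omega_\T$ — modulo the geometric discrepancy between $\partial\Omega$ and $\partial\Omega_\T$, which is where the hypotheses $|\Omega\setminus\Omega_\T|\lesssim h_\T^2$, the constant normal extension of $V$, and the $C^{1,1}$-seminorm $C_\Omega$ enter: the mismatch contributes an $O(h_\T^2)$-in-area defect that, paired with the $W^1_\infty$-bound on $u$ and the trace of $z$, is controlled by $Ch_\T$. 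The first term is handled by a trace/interpolation estimate on the surface: $\|I_\T(u-g)-(u-g)\|_{L^2(\partial\Omega)}\lesssim h_\T^{3/2}\|u-g\|_{H^{2}}$ is too optimistic, so instead I would use the sharp Hölder regularity $u\in C^{1,1/2}(\bar\Omega)$, $g\in C^{1,1/2}(\partial\Omega)$ to get $\|I_\T(u-g)-(u-g)\|_{L^\infty(\partial\Omega)}\lesssim h_\T^{3/2}|u-g|_{C^{1,1/2}}$, whence the term is $O(h_\T^{3/2})$ after pairing with $z\in L^1(\partial\Omega)$.

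Collecting the pieces yields $a(e,e)\lesssim h_\T^2\,(|u|_{H^2}^2+\dots)+h_\T^{3/2}(\dots)$, and since $a(e,e)\ge\|e\|_{H^1(\Omega)}^2$ up to the $\Omega\setminus\Omega_\T$ correction, this gives $\|u-U_\T\|_{H^1(\Omega)}\lesssim h_\T^{3/4}$ unless the boundary interpolation term can be sharpened. The main obstacle I anticipate is precisely squeezing the optimal rate $h_\T$ rather than $h_\T^{3/4}$ out of the Signorini boundary term: this requires exploiting the structure of the contact set more carefully — e.g.\ that $z$ is supported where $u=g$, that $u-g$ grows quadratically-in-$\sqrt{\mathrm{dist}}$ away from contact via $C^{1,1/2}$-regularity, and an interpolation estimate localized to elements touching $\partial\Omega_\T$ — together with a careful accounting of the $O(h_\T^2)$-area domain perturbation so that all error contributions balance at $O(h_\T)$. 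The dependence of $C$ on $|u|_{H^2(\Omega)}$, $|u|_{W^1_\infty(\Omega)}$, $|u|_{C^{1,1/2}(\bar\Omega)}$, $|g|_{C^{1,1/2}(\partial\Omega)}$ and $C_\Omega$ in the statement is a direct record of which of these ingredients supplies which bound.
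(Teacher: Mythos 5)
Your overall skeleton coincides with the paper's: the same energy identity, integration by parts reducing term I to the boundary integral $\int_{\partial\Omega} z(I_\T u-U_\T)$, and the same three-way splitting with complementarity killing the middle term and the sign condition killing the last one. However, at the step you yourself flag as ``the crux,'' your argument as written only delivers $\int_{\partial\Omega} z\bigl(I_\T(u-g)-(u-g)\bigr)\lesssim h_\T^{3/2}$ and hence the rate $h_\T^{3/4}$; you correctly identify that the contact-set structure must be exploited further but leave that sharpening as an anticipated obstacle rather than carrying it out. This is a genuine gap: the theorem claims the optimal rate $h_\T$, and the entire content of the result beyond \cite{BHR:77} lies in closing exactly this step.

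The missing idea is a face-by-face case analysis in which \emph{both} factors of the integrand are shown to be small on the only faces that contribute. Fix a face $S\subset\partial\Omega_\T$ with subtended piece $\hat S\subset\partial\Omega$. If $u-g\equiv 0$ on $\hat S$ then $I_\T(u-g)-(u-g)=0$ there; if $u-g>0$ on $\hat S$ then $z\equiv 0$ there by \eqref{eq:thin_complementary_system}. On the remaining (``active'') faces, two pointwise bounds hold simultaneously. First, since $u-g\ge 0$ attains the value $0$ at some $x_0\in\hat S$, its tangential gradient vanishes at $x_0$, and the argument of Lemma~\ref{lem:quadgrowth} adapted to $C^{1,1/2}$ regularity gives $0\le (u-g)(x)\le C h_\T^{3/2}$ on $\hat S$, hence $|I_\T(u-g)-(u-g)|\le Ch_\T^{3/2}$ there. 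Second --- and this is the ingredient absent from your sketch --- because $u-g>0$ somewhere on $\hat S$, complementarity forces $z(x_1)=0$ at some $x_1\in\hat S$; since $z=\partial_\nu u\in C^{0,1/2}(\partial\Omega)$ (a consequence of $u\in C^{1,1/2}(\bar\Omega)$), this yields $0\le z(x)\le Ch_\T^{1/2}$ on $\hat S$. The product of the two bounds is $O(h_\T^{2})$ per face, so the boundary term is $O(h_\T^2|\partial\Omega|)$, matching the interior interpolation error and producing the optimal rate after taking square roots. In other words, pairing the $L^\infty$ interpolation estimate against $\|z\|_{L^1(\partial\Omega)}$ globally, as you propose, throws away the extra factor $h_\T^{1/2}$ that the smallness of $z$ near the free boundary provides; recovering it is precisely what the theorem requires. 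Your treatment of term II and of the domain perturbation $\Omega\setminus\Omega_\T$ is consistent with the paper's.
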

\begin{proof}
The $C^{1,1/2}$ regularity of $u$ implies that $0 \leq z = \partial_\nu u \in C^{0,1/2}(\partial\Omega)$. We now
proceed as in Theorem \ref{TH:H1_opt_est} and write
\begin{equation*}
  \begin{aligned}
    \| u - U_\T \|_{H^1(\Omega)}^2 &= \int_{\Omega} \nabla (u - U_{\T}) \nabla (I_{\T}u - U_{\T})+ (u-U_{\T}) (I_{\T}u - U_{\T}) \\
    &+ \int_{\Omega} \nabla (u - U_{\T}) \nabla ( u - I_{\T}u) +   (u-U_{\T}) (u - I_{\T}u) = \textrm{I} + \textrm{II}.
  \end{aligned}
\label{auxthin}
\end{equation*}
To estimate \textrm{I} we exploit that $u \in H^2(\Omega)$ and integrate by parts to obtain
\begin{align*}
%  \int_{\Omega} \nabla(u-U_{\T}) \nabla(I_{\T}u - U_{\T}) 
% & \RHN{ + (u-U_{\T}) (I_{\T}u - U_{\T})}
\textrm{I}
% & \leq \int_{\Omega}(\RHN{-\Delta u + u})(I_{\T}u - U_{\T})
% + \int_{\partial \Omega} z(I_{\T}u - U_{\T}) - \int_{\Omega} f (I_{\T}u - U_{\T})
% \\
% & 
\leq \int_{\Omega}(-\Delta u + u - f)(I_{\T}u - U_{\T})
+ \int_{\partial \Omega} z(I_{\T}u - U_{\T}),
\end{align*}
where we have also used that $U_\T$ solves the discrete problem. 
Since $-\Delta u + u = f$ a.e. in $\Omega$, the first term
  vanishes. For the second term, we write
\[
\int_{\partial \Omega} z (I_{\T}u - U_{\T}) = 
\int_{\partial \Omega} z (I_{\T}(u-g) - (u-g))
+
\int_{\partial \Omega} z  (u-g)
+
\int_{\partial \Omega} z  (I_{\T}g - U_{\T}).
\]
The complementarity conditions \eqref{eq:thin_complementary_system} imply
$
 \int_{\partial \Omega} z(u-g) = 0.
$
Since $U_\T \in \mathcal{K}_\T$ we have that $U_{\T} \geq I_{\T} g$ on $\partial \Omega$ which yields
$
 \int_{\partial \Omega} z(I_{\T}g-U_{\T}) \leq 0.
$
For the remaining term we only need to consider faces $S$ on $\partial \Omega_{\T}$ for which, on the corresponding
subtended hypersurfaces $\hat{S}$, $u- g$ is not identically zero nor strictly positive for otherwise
$I_{\T}(u-g) = u-g =0$ or $z = 0$, respectively. 
If $S$ is one of these faces, then 
there exists $x_0 \in \hat{S}$ such that the tangential gradient
$
 \nabla_{\Gamma}(u-g)(x_0) = 0
$
because $u-g \geq 0$.
Since $u-g \in C^{1,1/2}(\partial\Omega)$, employing an argument
similar to Lemma~\ref{lem:quadgrowth}, we find $C>0$ that
depends only on $|u-g|_{C^{1,1/2}(\partial\Omega)}$ for which
\[
 0 \leq (u-g)(x) \leq Ch_{\T}^{3/2} \qquad \forall x \in \hat{S}.
\]
Conditions \eqref{eq:thin_complementary_system} imply there is $x_1 \in \hat{S}$ with $z(x_1)=0$. Now, since $u \in C^{1,1/2}(\bar\Omega)$, we have that
$
 0 \leq z(x) \leq Ch_{\T}^{1/2}
$
for all $x \in \hat{S}$,
with $C = |u|_{C^{1,1/2}(\bar\Omega)}$.
Combine these estimates to obtain
\[
\textrm{I} \leq \int_{\partial \Omega} z(I_{\T}(u-g) - (u-g)) \leq Ch_{\T}^2 | \partial \Omega|.
\]
We now split the term \textrm{II} into two contributions
  \textrm{II}$(\Omega_\T)$ and
  \textrm{II}$(\Omega\setminus \Omega_{\T}$) over the sets $\Omega_\T$ and 
$\Omega\setminus \Omega_{\T}$ respectively. 
For the first one, we use interpolation theory to get
$
\|  u - I_{\T}u \|_{H^1(\Omega_{\T})} \leq C h_{\T} |u|_{H^2(\Omega)}.  
$
We estimate the remaining term
\textrm{II}$(\Omega\setminus \Omega_{\T})$ as follows:
\begin{equation*}
  \textrm{II}(\Omega\setminus\Omega_\T) 
 \leq \frac12 \| u - U_\T \|_{H^1(\Omega\setminus \Omega_\T)}^2 + 
%  \frac12 
 C|u|_{W^1_\infty(\Omega)}^2 |\Omega \setminus \Omega_\T|,
\end{equation*}
where we used that $I_\T$ is stable in
$W^1_\infty(\Omega_\T)$. Since $\partial\Omega$ is $C^{1,1}$,
we realize that
\[
 \textrm{II} \leq \frac12\| u - U_\T \|_{H^1(\Omega)}^2 + C
\left( |u|_{H^2(\Omega)}^2 + C_\Omega |u|_{W^1_\infty(\Omega)}^2 \right) h_{\T}^2 ,
\]
which, together with the estimate for $\textrm{I}$, yields the assertion \eqref{eq:thin_estimate}.
\end{proof}

%%%%%%%%%%%%%%%%%%%%%%%%%%%%%%%%%%%%%%%%%%%%%%%%%%
\section{The fractional obstacle problem}
\label{sec:fractional_obstacle}
%%%%%%%%%%%%%%%%%%%%%%%%%%%%%%%%%%%%%%%%%%%%%%%%%%
We now consider the obstacle problem for fractional powers of the
Laplace operator, which is a rather recent development. 
To be concrete, let $\Omega$ be an open, bounded, convex and connected domain
of $\R^n$ ($n\ge1$), with polyhedral boundary $\partial\Omega$, 
and for $s\in (0,1)$ let $\Hs$ be
\begin{equation}
  \label{H}
  \Hs = H^s(\Omega) \quad s \in (0,\frac12), \quad
  \mathbb{H}^{1/2}(\Omega) = H_{00}^{1/2}(\Omega), \quad
  \Hs = H_0^s(\Omega) \quad s \in (\frac12,1),
\end{equation} 
which is a Hilbert space, and $\Hsd$ be its dual. Given
$f \in \Hsd$ and an obstacle 
$\psi \in \Hs \cap C(\bar{\Omega})$ satisfying $\psi \leq 0$ on
$\partial \Omega$, the fractional obstacle problem reads
\begin{equation}
\label{eq:frac_obstacle}
u \in \mathcal{K}:\quad \langle \Laps u, u - w \rangle_{\Hsd \times \Hs} \leq 
\langle f, u - w \rangle_{\Hsd \times \Hs}
\quad \forall w \in \mathcal{K},
 \end{equation}
where $\mathcal{K} := \{ w\in\Hs: ~ w\ge \psi \text{ a.e.~in } \Omega \}$
is the convex, closed, and nonempty set of admissible displacements.
Among the several definitions of $\Laps$, we adopt that in \cite{NOS} 
which is based on the spectral theory of the Dirichlet Laplacian.

The problem \eqref{eq:frac_obstacle} has a unique solution 
$u \in \Hs$ \cite{Friedman,KS}. 
Moreover, if we assume that $\Omega$ and $f$ are such that $\Laps u \in L^2(\Omega)$,
we can rewrite \eqref{eq:frac_obstacle} as a complementarity system:
\begin{equation}
\label{eq:frac_complementary_system}
\Laps u - f \geq 0, \quad u - \psi \geq 0, \quad (\Laps u - f)(u - \psi) = 0 \quad \textrm{ a.e. on } \partial \Omega.
 \end{equation}

%-----------------------------------------------------------------------------
\subsection{Localization and Truncation}\label{sub:localize}
%-----------------------------------------------------------------------------
The optimal H\"older regularity of the solution $u$ to
\eqref{eq:frac_complementary_system}, namely
$
u \in C^{1,s}(\Omega),
$
has been studied 
% by Silvestre \cite{Silvestre} and 
by Caffarelli, Salsa and Silvestre
\cite{CSS:08}
% , the latter 
using the extension proposed by Caffarelli and Silvestre in \cite{CS:07} to $\mathbb{R}^{n+1}_+$, which we now review.
The problem
\begin{equation}
\label{eq:linearLaps}
  \Laps u = f \ \text{in } \Omega, \qquad u = 0 \ \text{on } \partial\Omega,
\end{equation}
is equivalent to the Neumann-to-Dirichlet map associated with the following nonuniformly elliptic
mixed boundary value problem
\begin{equation}
\label{extension} 
  \DIV\left( y^\alpha \nabla \ue \right) = 0 \ \text{in } \C, \quad
  \ue = 0 \  \text{on } \partial_L \C, \quad
  \partial^{\alpha}_{\nu} \ue = d_s f \ \text{on } \Omega \times \{0\},
\end{equation}
where $\alpha := 1-2s$, $\C := \Omega \times (0,\infty) = \{ x = (x',y) \in \R_{+}^{n+1}: x' \in \Omega, y > 0 \}$,
$\partial_L \C  := \partial \Omega \times [0,\infty)$ corresponds to the lateral boundary
of $\C$, 
$\partial^{\alpha}_{\nu} \ue  = -\lim_{y \rightarrow 0^+} y^\alpha \ue_y,$
and $d_s$ denotes a constant that depends only on $s$; see \cite{CS:07,NOS} for details.
Upon defining the weighted Sobolev space
$
  \HL(y^{\alpha},\C) := \left\{ w \in H^1(y^\alpha,\C): w = 0 \textrm{ on } \partial_L \C\right\}
$,
the linear problem \eqref{extension} can be formulated weakly as follows:
\[
\ue \in \HL(y^{\alpha},\C):  \quad \int_{\C} y^{\alpha} \nabla \ue \nabla w = d_s 
\langle f, \tr w \rangle_{\Hsd \times \Hs} \quad \forall w \in \HL(y^{\alpha},\C).
\]
For $w \in \HL(y^{\alpha},\C)$, we denote by $\tr w$ its trace onto
$\Omega \times \{ 0 \}$, and we recall that the trace operator $\tr$ satisfies (see \cite[Proposition 2.5]{NOS})
\begin{equation}
\label{Trace_estimate}
\tr \HL(y^\alpha,\C) = \Hs,
\qquad
  \|\tr w\|_{\Hs} \leq C_{\tr} \| w \|_{\HLn(y^\alpha,\C)}.
\end{equation}

This extension problem is due to Caffarelli and Silvestre \cite{CS:07} for $\Omega = \R^n$;
see \cite{NOS} and references therein for its modification to bounded domains. With its aid, we recast \eqref{eq:frac_obstacle} as follows
\begin{equation}
\label{eq:extended_obstacle}
\ue \in \mathcal{K}: \quad 
\int_{\C} y^{\alpha} \nabla \ue \nabla (\ue-w) \leq 
d_s \langle f, \tr (\ue-w)\rangle_{\Hsd \times \Hs} \quad \forall w \in \mathcal{K},
\end{equation}
where
$
 \mathcal{K}:= \{ w \in \HL(y^{\alpha},\C): \tr w \geq \psi \textrm{ a.e. in } \Omega   \}
$
denotes the set of admissible displacements.
If $\ue$ solves \eqref{eq:extended_obstacle}, then $u = \tr \ue$
solves \eqref{eq:frac_obstacle}. The obstacle
constraint is thus applied on part of $\partial\C$.

We now exploit the exponential decay of the 
solution $\ue$ to \eqref{eq:extended_obstacle} to truncate the cylinder $\C$.

\begin{lemma}[exponential decay]
\label{lem:exdecay}
Let $\ue \in \HL(y^\alpha, \C)$ denote the solution to
\eqref{eq:extended_obstacle}. 
Then, we have
\[
\| \GRAD \ue \|_{L^2(y^\alpha,\Omega\times(\Y,\infty))} \lesssim e^{-\sqrt{\lambda_1} \Y/2} \left( \| \psi \|_{\Hs} + \| f \|_{\Hsd} \right) 
  \quad \forall \Y \ge 1.
\]
\end{lemma}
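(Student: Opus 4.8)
The plan is to establish the exponential decay by a Caccioppoli-type (energy) argument combined with the Poincaré inequality that underlies the first Dirichlet eigenvalue $\lambda_1$ of $-\Delta$ in $\Omega$. First I would record the basic energy identity for the extension: testing the variational inequality \eqref{eq:extended_obstacle} with an admissible competitor built from $\ue$ itself (for instance $w = \ue$, or more precisely a truncation that stays admissible because it only modifies $\ue$ for $y$ large where the trace is untouched) gives that $\ue$ is harmonic with respect to the weighted operator $\DIV(y^\alpha\nabla\cdot)$ away from $\Omega\times\{0\}$, and in particular on the subcylinder $\Omega\times(\Y_0,\infty)$ it satisfies $\int_{\Omega\times(\Y_0,\infty)} y^\alpha \nabla\ue\nabla\phi = 0$ for all $\phi$ vanishing on $\partial_L\C$ and at $y=\Y_0$. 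The key point is that for $y$ bounded away from $0$ the weight $y^\alpha$ is smooth and bounded above and below on any slab $\Y_0 \le y \le \Y_0+1$, so no degeneracy enters the decay mechanism; the weight only matters through the global energy bound $\|\GRAD\ue\|_{L^2(y^\alpha,\C)} \lesssim \|\psi\|_{\Hs} + \|f\|_{\Hsd}$, which follows from \eqref{eq:extended_obstacle}, the trace estimate \eqref{Trace_estimate}, and coercivity.

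Next I would set $E(\Y) := \int_{\Omega\times(\Y,\infty)} y^\alpha |\nabla\ue|^2$ and derive a differential inequality $E(\Y) \le -C\, E'(\Y)$, i.e.\ $E(\Y)\lesssim e^{-c\Y}E(\Y_0)$, with the precise rate $c = \sqrt{\lambda_1}$. Concretely: use a smooth cutoff $\zeta$ equal to $1$ on $(\Y+1,\infty)$, supported in $(\Y,\infty)$, with $|\zeta'|\lesssim 1$, test the weak harmonicity relation with $\zeta^2\ue$ to get $\int y^\alpha\zeta^2|\nabla\ue|^2 \lesssim \int y^\alpha |\zeta'||\zeta|\,|\ue||\nabla\ue|$, and then on the slab $\Y<y<\Y+1$ use the Poincaré inequality in the $x'$-variables, $\|\ue(\cdot,y)\|_{L^2(\Omega)}^2 \le \lambda_1^{-1}\|\nabla_{x'}\ue(\cdot,y)\|_{L^2(\Omega)}^2$ (valid because $\ue$ vanishes on $\partial_L\C$), to absorb the $\|\ue\|$ term by the gradient. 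This yields $E(\Y+1)\le \theta\, E(\Y)$ for some $\theta<1$ determined by $\lambda_1$; iterating over integer steps and optimizing the cutoff constants recovers $E(\Y)\lesssim e^{-\sqrt{\lambda_1}\,\Y}\,E(1)$ for $\Y\ge1$, and taking square roots gives the stated bound with the factor $e^{-\sqrt{\lambda_1}\,\Y/2}$.

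The main obstacle I anticipate is twofold. First, one must be careful that the test functions used remain admissible for the \emph{variational inequality} rather than an equation — but since all the modifications happen at $y>\Y\ge1$, which does not affect the trace $\tr\ue = u$ on $\Omega\times\{0\}$, the constraint $\tr w \ge \psi$ is untouched and $\ue\pm(\text{compactly supported bump in }y>\Y)$ are both admissible, so the variational inequality collapses to an equality there; this needs to be stated cleanly. Second, one must extract the sharp constant $\sqrt{\lambda_1}$ rather than an unspecified $c>0$: this forces a slightly more delicate choice than a crude cutoff — either a separation-of-variables/eigenfunction-expansion argument (write $\ue(x',y) = \sum_k u_k(y)\,\varphi_k(x')$ in the Dirichlet eigenbasis, observe each $u_k$ solves an ODE whose decaying solution behaves like $e^{-\sqrt{\lambda_k}\,y}$ up to the weight-induced Bessel correction, and note $\lambda_k\ge\lambda_1$), or an exponentially weighted energy estimate testing with $e^{2\sqrt{\lambda_1}\,y}\ue$ against a cutoff. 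I would favor the exponentially weighted energy approach as the cleanest route to the exact exponent, with the eigenfunction expansion mentioned as the conceptual explanation; the weight $y^\alpha$ then appears only in lower-order terms that are controlled once $\Y\ge1$, which is exactly why the hypothesis $\Y\ge1$ is imposed.
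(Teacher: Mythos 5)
Your argument is correct, but it follows a genuinely different route from the paper. The paper's proof is a two‑line reduction: it observes that $\ue$ coincides with the solution of the linear Dirichlet problem $\DIV(y^\alpha\nabla w)=0$ in $\C$, $w=0$ on $\partial_L\C$, $\tr w=\tr\ue$, and then invokes the eigenfunction representation formula and the decay estimates already established in \cite{NOS} for that linear problem (these encode exactly the $e^{-\sqrt{\lambda_k}\,y}$ behaviour of each mode you mention as the ``conceptual explanation''), finishing with the same stability bound $\|\tr\ue\|_{\Hs}\lesssim\|\psi\|_{\Hs}+\|f\|_{\Hsd}$ that you use. You instead prove the decay from scratch: you correctly note that perturbations supported in $\{y>\Y\}$ leave the trace, hence the constraint, untouched, so the variational inequality degenerates to the weighted harmonicity $\int y^\alpha\nabla\ue\nabla\phi=0$ there, and you then run a Caccioppoli/Poincar\'e argument on $E(\Y)=\int_{\Omega\times(\Y,\infty)}y^\alpha|\nabla\ue|^2$. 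Your own caveat is the right one: the crude slab iteration $E(\Y+1)\le\theta E(\Y)$ does not by itself produce the constant $\sqrt{\lambda_1}$, and the differential inequality $E(\Y)\le(2\sqrt{\lambda_1})^{-1}(-E'(\Y))$ (obtained by integrating by parts on $\{y>\Y\}$, using $\|\ue(\cdot,y)\|_{L^2(\Omega)}\le\lambda_1^{-1/2}\|\nabla_{x'}\ue(\cdot,y)\|_{L^2(\Omega)}$, and discarding the boundary term at infinity along a subsequence) is needed; note that it in fact yields $e^{-\sqrt{\lambda_1}(\Y-1)}$, stronger than the stated $e^{-\sqrt{\lambda_1}\Y/2}$. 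The trade‑off is the usual one: the paper's reduction is shorter and gets the sharp rate for free from the explicit (Bessel‑function) representation, while your energy argument is self‑contained, avoids the representation formula entirely, and would survive variable coefficients in $x'$ — at the cost of the extra care needed to pin down the exponent.
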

\begin{proof}
Notice that $\ue \in \HL(y^\alpha, \C)$ solves
\[
  \DIV\left( y^\alpha \nabla w \right) = 0 \ \text{in } \C, \quad
  w = 0 \  \text{on } \partial_L \C, \quad
  \tr w = \tr \ue \ \text{on } \Omega \times \{0\}.
\]
The representation formula provided in \cite[(2.24)]{NOS}, together with the decay estimates of \cite[Proposition 3.1]{NOS}, applied to this problem yield
\[
  \| \GRAD \ue \|_{L^2(y^\alpha,\Omega\times(\Y,\infty))} \lesssim e^{-\sqrt{\lambda_1} \Y/2} \| \tr \ue \|_{\Hs}.
\]
Stability of problem \eqref{eq:extended_obstacle} in terms of
  $\psi$ and $f$ allows us to deduce the desired estimate.
\end{proof}
In view of such an exponential decay, we truncate the cylinder $\C$ to a
height $y=\Y$, i.e. set $\C_{\Y}:= \Omega \times (0,\Y)$; see \cite[Theorem 3.5]{NOS}
for the linear case. We then consider
\begin{equation}
\label{eq:truncobstacle}
  \ve \in \mathcal{K}_{\Y}: \quad 
  \int_{\C_\Y} y^{\alpha} \nabla \ve \nabla (\ve-w) \leq 
  d_s \langle f, \tr (\ve-w)\rangle_{\Hsd \times \Hs} \quad \forall
  w \in \mathcal{K}_{\Y},
\end{equation}
where
$
 \mathcal{K}_{\Y}:= \{ w \in\ \HL(y^\alpha, \C_\Y): \tr w \geq \psi  \textrm{ a.e. in } \Omega \}.
$
As in \cite{NOS}, we only incur in an exponentially small error 
by considering the truncated version 
\eqref{eq:truncobstacle} of problem \eqref{eq:extended_obstacle}.
In addition, $\ve$ satisfies a complementarity system such as 
\eqref{eq:frac_complementary_system}:
\begin{equation}\label{eq:frac_complementary_extension}
\ze:= \partial^\alpha_\nu\ve - d_s f\ge 0,
\quad
\tr\ve - \psi \ge 0, 
\quad
\ze \big(\tr \ve - \psi \big) = 0
\quad \text{a.e. in } \Omega.
\end{equation}

Using the extended problem \eqref{eq:extended_obstacle} to
$\mathbb{R}^{n+1}_+$, Caffarelli, Salsa and Silvestre 
\cite[Theorem 6.7]{CSS:08} showed that if $x_0' \in \Omega$ and
$\ue(x_0',0)-\psi(x_0') = 0$ then
\begin{equation}
\label{reg:CSS}
 0 \leq  \ue(x',0) - \psi(x') \leq C |x'-x_0'|^{1+s}
\end{equation}
for a positive constant $C$. This implies that $u=\ue(\cdot,0) \in
C^{1,s}(\Omega)$ and $\partial_\nu^\alpha \ue(\cdot,0) \in C^{0,1-s}(\Omega)$
because $\partial_\nu^\alpha \ue(\cdot,0) = d_s (-\Delta)^s u$. 
Since their techniques are
\emph{local}, they apply to our extension to the cylinder $\C$ and
truncation to $\C_\Y$ as well, and give the
optimal H\"older regularity for $\ve$:
\begin{equation}\label{eq:optimal_reg}
\partial^\alpha_\nu \ve(\cdot,0) \in  C^{0,1-s}(\Omega).
\end{equation}

These results account only for the regularity of $\ve$ in $\Omega$.
  However, we also need to know the regularity of $\ve$
  over the cylinder $\C_\Y$, which is established by Allen et. al.
  \cite[Theorem 6.4]{ALP:14}:
\begin{equation}
\label{eq:optimal_reg_cyl}
  s \leq \frac12 \ \Rightarrow \ve \in C^{0,2s}(\C_\Y); \qquad s > \frac12 \ \Rightarrow \ve \in C^{1,2s-1}(\C_\Y).
\end{equation}

%-----------------------------------------------------------------------------
\subsection{Discretization}\label{sub:discfracobs}
%-----------------------------------------------------------------------------
In \cite{NOS} we have utilized the local approach of Caffarelli and Silvestre 
\cite{CS:07} to approximate problem \eqref{eq:linearLaps}.
After having truncated $\C$ to $\C_{\Y}$, the next issue is to compensate
for the rather singular behavior of $\ve(\cdot,y)$ by \emph{anisotropic} meshes
$\mathcal{I}_\Y$ of $[0,\Y]$ with mesh points:
\begin{equation}
\label{graded_mesh}
  y_k = \left( \frac{k}{M}\right)^{\gamma} \Y, \quad k=0,\dots,M, \quad \gamma > 3/(1-\alpha)=3/2s > 1.
\end{equation}
This entails the development
of a polynomial interpolation theory in weighted Sobolev spaces over anisotropic meshes
with Muckenhoupt weights; see \cite{NOS,NOS2}. 

Let $\T_\Omega = \{K\}$ be a conforming and shape regular mesh of
$\Omega$ as in \S~\ref{subsec:fe_approximation}.
The collection of these triangulations $\T_{\Omega}$ is denoted by $\Tr_\Omega$. 
We construct the mesh $\T_{\Y}$ of the cylinder $\C_{\Y}$
as the tensor product triangulation 
of $\T_\Omega$ and $\mathcal{I}_\Y$, and denote the set
of all such triangulations $\T_{\Y}$ by $\Tr$.
We assume that there is a constant $\sigma_{\Y}$ such that if $T_1=K_1\times I_1$ and $T_2=K_2\times I_2 \in \T_\Y$ have nonempty intersection, then
$ h_{I_1}^{-1} h_{I_2} \leq \sigma_{\Y}$,
where $h_I = |I|$. This weak regularity condition on the mesh allows for 
anisotropy in the extended variable \cite{DL:05,NOS,NOS2}. 
For $\T_{\Y} \in \Tr$, we define
the finite element space 
\begin{equation}
\label{eq:FESpace}
  \V(\T_\Y) = \left\{
            W \in C^0( \overline{\C_\Y}): W|_T \in \mathcal{P}(K) \otimes \mathbb{P}_1(I) \ \forall T \in \T_\Y, \
            W|_{\Gamma_D} = 0
          \right\},
\end{equation}
where $\Gamma_D = \partial_L \C_{\Y} \cup \Omega \times \{ \Y\}$ is the
Dirichlet boundary and $\mathcal{P}(K)$ is defined as in \S~\ref{subsec:fe_approximation}.
The Galerkin approximation of the truncated problem is then given by
\begin{equation}
\label{harmonic_extension_weak}
 U_{\T_{\Y}} \in \V(\T_{\Y}): \
  \int_{\C_\Y} y^{\alpha}\nabla U_{\T_{\Y}} \nabla W = d_s \langle f, \textrm{tr}_{\Omega} W \rangle_{\Hsd \times \Hs}
  \quad \forall W \in \V(\T_{\Y}).
\end{equation}

Notice that $\#\T_{\Y} = M \, \# \T_\Omega$ and $\# \T_\Omega \approx M^n$
imply $\#\T_\Y \approx M^{n+1}$. Finally, if $\T_\Omega$ is shape regular and quasi-uniform, we have
$h_{\T_{\Omega}} \approx (\# \T_{\Omega})^{-1/n}$. All these considerations allow us to obtain the
following result; see \cite[Theorem 5.4]{NOS} and \cite[Corollary 7.11]{NOS}.

\begin{theorem}[a priori error estimate]
\label{TH:fl_error_estimates}
Let $\T_\Y \in \Tr$ be a tensor product grid, which is quasiuniform in $\Omega$ and graded in the 
extended variable so that \eqref{graded_mesh} holds. If $\V(\T_\Y)$ is defined by \eqref{eq:FESpace}, $U_{\T_\Y} \in \V(\T_\Y)$ solves \eqref{harmonic_extension_weak} 
and $\ue \in \HL(y^\alpha,\C)$ solves \eqref{extension}, then we have
\begin{equation*}
\label{optimal_rate}
  \| \ue - U_{\T_\Y} \|_{\HLn(y^\alpha,\C)} \lesssim
|\log(\# \T_{\Y})|^s(\# \T_{\Y})^{-1/(n+1)} \|f \|_{\mathbb{H}^{1-s}(\Omega)},
\end{equation*}
where $\Y \approx \log(\# \T_{\Y})$. Alternatively, if $u$ solves \eqref{eq:linearLaps}, then
\begin{equation*}
\| u - U_{\T_\Y}(\cdot,0) \|_{\Hs} \lesssim
|\log(\# \T_{\Y})|^s(\# \T_{\Y})^{-1/(n+1)} \|f \|_{\mathbb{H}^{1-s}(\Omega)}.
\end{equation*}
\end{theorem}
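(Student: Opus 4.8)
The plan is to split the error $\ue - U_{\T_\Y}$ into a \emph{truncation} part and a \emph{Galerkin/interpolation} part. First I would introduce the solution of the truncated \emph{linear} problem on $\C_\Y$ (the equality analogue of \eqref{eq:truncobstacle}; see \cite[Theorem 3.5]{NOS}), call it $\ve$, and extend both $\ve$ and $U_{\T_\Y}$ by zero to all of $\C$; since this extension contributes nothing to the $\HLn(y^\alpha,\cdot)$-energy outside $\C_\Y$, it suffices to bound $\|\ue-\ve\|_{\HLn(y^\alpha,\C)}$ and $\|\ve-U_{\T_\Y}\|_{\HLn(y^\alpha,\C_\Y)}$ and add them. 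The first is the content of Lemma~\ref{lem:exdecay} in its linear form: $\ue-\ve$ is $y^\alpha$-harmonic above $y=\Y$ with a controlled trace, so the decay/representation estimates quoted there give $\|\ue-\ve\|_{\HLn(y^\alpha,\C)}\lesssim e^{-\sqrt{\lambda_1}\Y/4}\|f\|_{\Hsd}$.

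For the discretization part, Galerkin orthogonality and coercivity of the bilinear form $\int_{\C_\Y}y^\alpha\nabla\cdot\nabla\cdot$ yield the C\'ea estimate $\|\ve-U_{\T_\Y}\|_{\HLn(y^\alpha,\C_\Y)}\le\|\ve-\Pi_{\T_\Y}\ve\|_{\HLn(y^\alpha,\C_\Y)}$, where $\Pi_{\T_\Y}$ is the quasi-interpolant onto $\V(\T_\Y)$ constructed for Muckenhoupt-weighted Sobolev spaces over anisotropic tensor-product meshes. On an element $T=K\times I$ the governing estimates are anisotropic, $\|\nabla_{x'}(\ve-\Pi_{\T_\Y}\ve)\|_{L^2(y^\alpha,T)}\lesssim h_K\|\nabla_{x'}\nabla\ve\|_{L^2(y^\alpha,S_T)}$ and $\|\partial_y(\ve-\Pi_{\T_\Y}\ve)\|_{L^2(y^\alpha,T)}\lesssim h_I\|\partial_{yy}\ve\|_{L^2(y^\alpha,S_T)}$ over a fixed patch $S_T\supset T$, where one uses the weak regularity $h_{I_1}^{-1}h_{I_2}\le\sigma_\Y$.

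Next I would insert the regularity of $\ve$. Its tangential second derivatives are globally bounded, $\|\nabla_{x'}\nabla\ve\|_{L^2(y^\alpha,\C)}\lesssim\|f\|_{\mathbb{H}^{1-s}(\Omega)}$, so with $\T_\Omega$ quasiuniform ($h_{\T_\Omega}\approx M^{-1}$) the $x'$-contribution is $\lesssim M^{-1}\|f\|_{\mathbb{H}^{1-s}(\Omega)}$. By contrast $\partial_{yy}\ve$ is singular as $y\to0^+$ and is only controlled in a heavier weighted norm, $\|\partial_{yy}\ve\|_{L^2(y^\beta,\C)}\lesssim\|f\|_{L^2(\Omega)}$ for a suitable $\beta>\alpha$; this is exactly what forces the grading \eqref{graded_mesh}. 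Writing the $y$-contribution of the squared error as $\sum_k h_{I_k}^2\int_{I_k}y^{\alpha-\beta}(\cdots)\diff y$ and using $y_k=(k/M)^\gamma\Y$ with $\gamma>3/(1-\alpha)=3/(2s)$, every term is summable and the sum is $\lesssim M^{-2}\|f\|_{L^2(\Omega)}^2$, \emph{except} on the first interval $[0,y_1]$, whose weight integral carries an extra factor that is a power of $\Y$.

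Collecting, $\|\ue-U_{\T_\Y}\|_{\HLn(y^\alpha,\C)}\lesssim\big(e^{-\sqrt{\lambda_1}\Y/4}+C(\Y)M^{-1}\big)\|f\|_{\mathbb{H}^{1-s}(\Omega)}$ with $C(\Y)$ polynomial in $\Y$. Taking $\Y\approx\log M$ makes the exponential term lower order and converts $C(\Y)$ into a logarithmic factor; tracking the precise power of $\Y$ coming from the first $y$-interval produces $|\log(\#\T_\Y)|^s$. Since $\#\T_\Y=M\,\#\T_\Omega\approx M^{n+1}$, we get $M^{-1}\approx(\#\T_\Y)^{-1/(n+1)}$ and $\Y\approx\log(\#\T_\Y)$, which is the first assertion; the trace inequality \eqref{Trace_estimate} applied to $\tr(\ue-U_{\T_\Y})$ (with $U_{\T_\Y}$ extended by zero) then gives the second. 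I expect the crux to be the two technical pillars of \cite{NOS,NOS2}: the anisotropic interpolation estimates in Muckenhoupt-weighted spaces, and the sharp weighted bound $\|\partial_{yy}\ve\|_{L^2(y^\beta,\C)}\lesssim\|f\|_{L^2(\Omega)}$ together with the verification that $\gamma>3/(2s)$ is exactly the grading making the $y$-sum $\lesssim M^{-2}$ — these simultaneously yield the optimal rate, force the grading, and generate the logarithm.
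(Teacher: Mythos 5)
The paper does not prove this theorem in the text but quotes it from \cite{NOS} (Theorem 5.4 and Corollary 7.11 there), and your outline faithfully reconstructs that proof: truncation of $\C$ via the exponential decay estimate, C\'ea's lemma for the truncated linear problem, anisotropic quasi-interpolation in Muckenhoupt-weighted Sobolev spaces on the tensor-product mesh, the two separate weighted regularity estimates for $\nabla\nabla_{x'}\ve$ and $\partial_{yy}\ve$ (the latter in a heavier weight, which is what forces the grading $\gamma>3/(2s)$), the choice $\Y\approx\log(\#\T_\Y)$ turning the polynomial-in-$\Y$ prefactor into the logarithmic factor, and the trace inequality \eqref{Trace_estimate} for the second assertion. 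This is essentially the same argument as in the cited reference, so nothing further is needed.
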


The discretization of 
\eqref{eq:frac_obstacle} is then carried out by a 
Galerkin approximation to \eqref{eq:truncobstacle},
namely
\begin{equation}
\label{eq:FEfracobstacle}
  V_{\T_\Y} \in \mathcal{K}_{\T_\Y}: \
  \int_{\C_\Y} y^{\alpha} \nabla V_{\T_\Y} \nabla (V_{\T_\Y}-W) \leq 
  d_s \langle f, \tr (V_{\T_\Y}-W)\rangle_{\Hsd \times \Hs},
\end{equation}
for all $W \in \mathcal{K}_{\T_\Y}$. The discrete admissible set $\mathcal{K}_{\T_{\Y}}$ is defined by
\begin{equation}
\label{eq:K_discreteFlap}
\mathcal{K}_{\T_{\Y}}:= \{ W \in \V(\T_{\Y}): \tr W \geq \tr \Pi_{\T_{\Y}} \Psi \textrm{ on } \Omega \},
\end{equation}
where $\Psi \in \HL(y^\alpha, \C_\Y)$ is the $\alpha$-harmonic extension of $\psi$ and $\Pi_{\T_{\Y}}:L^1(\C_\Y) \to \V(\T_\Y)$ is the quasi-interpolation operator on the mesh $\T_{\Y}$ defined in \cite{NOS,NOS2}. This operator is constructed by local averaging,
is able to separate the variables $x'\in\Omega$ and $y\in(0,\Y)$, is stable in $\HL(y^\alpha,\C_\Y)$ and $L^p(\C_\Y)$ and exhibits {\it quasi-local} optimal approximation
properties for any $1\le p \le \infty$.

%----------------------------------------------------------------------------
\subsection{Energy error analysis}\label{sub:apriorifracobs}
%----------------------------------------------------------------------------

We build on the preceding approach to obtain an almost optimal error estimate for the solution of \eqref{eq:frac_obstacle}. 
We start by quantifying the error $\ue-\ve$ due to truncation of $\C$.

\begin{proposition}[exponential error estimate]
\label{prop:experr}
Let $\ue \in \mathcal{K}$  and $\ve \in \mathcal{K}_{\Y}$ solve problems \eqref{eq:extended_obstacle}
and \eqref{eq:truncobstacle}, respectively. If $\Y\ge1$, then we have
\[
  \| \GRAD( \ue - \ve ) \|_{L^2(y^\alpha,\C_\Y)} \lesssim e^{-\sqrt{\lambda_1} \Y/8} \left( \| \psi \|_{\Hs} + \| f \|_{\Hsd} \right).
\]
\end{proposition}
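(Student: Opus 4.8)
The plan is to compare the two variational inequalities \eqref{eq:extended_obstacle} and \eqref{eq:truncobstacle} by testing each with an admissible competitor built from the other solution, and to absorb the error incurred when transferring between $\C$ and $\C_\Y$ using the exponential decay from Lemma~\ref{lem:exdecay}. The central difficulty is that $\ve$, living on $\C_\Y$, is not directly an admissible test function in \eqref{eq:extended_obstacle} (it is not defined on all of $\C$), and conversely the restriction of $\ue$ to $\C_\Y$ need not satisfy the constraint on $\Omega\times\{\Y\}$; so some cutoff/extension gymnastics near $y=\Y$ is needed, and every modification must be controlled by $e^{-c\sqrt{\lambda_1}\Y}$.

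First I would introduce a smooth cutoff $\chi=\chi(y)$ with $\chi\equiv1$ on $(0,\Y/2)$, $\chi\equiv0$ near $y=\Y$, and $|\chi'|\lesssim \Y^{-1}$. Extend $\ve$ by zero to all of $\C$ (this is legitimate since $\ve$ vanishes on $\Gamma_D\supset\Omega\times\{\Y\}$), so that $\ve\in\mathcal{K}$: its trace still satisfies $\tr\ve\ge\psi$. Then $w=\ve$ is admissible in \eqref{eq:extended_obstacle}. Conversely, set $\widetilde{\ue}:=\chi\,\ue$; because $\chi\equiv1$ on $\Omega\times\{0\}$ we have $\tr\widetilde{\ue}=\tr\ue\ge\psi$, and $\widetilde{\ue}$ vanishes near $y=\Y$, hence $\widetilde{\ue}|_{\C_\Y}\in\mathcal{K}_\Y$ and is admissible in \eqref{eq:truncobstacle}. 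Now add the two inequalities: from \eqref{eq:extended_obstacle} with $w=\ve$,
\[
\int_{\C} y^\alpha \GRAD\ue\,\GRAD(\ue-\ve) \le d_s\langle f,\tr(\ue-\ve)\rangle,
\]
and from \eqref{eq:truncobstacle} with $w=\widetilde{\ue}|_{\C_\Y}$,
\[
\int_{\C_\Y} y^\alpha \GRAD\ve\,\GRAD(\ve-\widetilde{\ue}) \le d_s\langle f,\tr(\ve-\widetilde{\ue})\rangle.
\]
Since $\tr\ue=\tr\widetilde{\ue}$, the right-hand sides cancel upon addition. Writing $\ve-\widetilde{\ue}=(\ve-\ue)+(\ue-\widetilde{\ue})=(\ve-\ue)+(1-\chi)\ue$ on $\C_\Y$, and using that $\GRAD\ve=\GRAD\widetilde\ue=0$ and $\GRAD\ue$ contributes only the tail on $\C\setminus\C_\Y$, I would reorganize the sum to isolate $\|\GRAD(\ue-\ve)\|_{L^2(y^\alpha,\C_\Y)}^2$ on the left.

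The remaining terms are all supported either in the tail region $\C\setminus\C_\Y$ or in the transition strip $\Omega\times(\Y/2,\Y)$, and each carries at least one factor of $\GRAD\ue$ or $\ue$ restricted there; using $|\chi'|\lesssim\Y^{-1}$ for the commutator term $\GRAD((1-\chi)\ue)=(1-\chi)\GRAD\ue-\chi'\ue\,e_y$ and a weighted Poincar\'e inequality in $y$ to bound $\|\ue\|_{L^2(y^\alpha,\Omega\times(\Y/2,\Y))}$ by $\|\GRAD\ue\|_{L^2(y^\alpha,\Omega\times(\Y/2,\infty))}$, every such term is controlled by $\|\GRAD\ue\|_{L^2(y^\alpha,\Omega\times(\Y/2,\infty))}$ times either itself or $\|\GRAD(\ue-\ve)\|_{L^2(y^\alpha,\C_\Y)}$. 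Applying Lemma~\ref{lem:exdecay} with $\Y/2$ in place of $\Y$ gives $\|\GRAD\ue\|_{L^2(y^\alpha,\Omega\times(\Y/2,\infty))}\lesssim e^{-\sqrt{\lambda_1}\Y/4}(\|\psi\|_{\Hs}+\|f\|_{\Hsd})$. Finally, a Young's inequality absorbs the term containing $\|\GRAD(\ue-\ve)\|_{L^2(y^\alpha,\C_\Y)}$ into the left side, leaving $\|\GRAD(\ue-\ve)\|_{L^2(y^\alpha,\C_\Y)}^2\lesssim e^{-\sqrt{\lambda_1}\Y/4}(\cdots)\,\|\GRAD(\ue-\ve)\|_{L^2(y^\alpha,\C_\Y)}+e^{-\sqrt{\lambda_1}\Y/2}(\cdots)^2$; solving this quadratic inequality yields the bound with rate $e^{-\sqrt{\lambda_1}\Y/8}$, matching the statement. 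I expect the bookkeeping of which cross-terms survive the cancellation of the $f$-pairings — and verifying that the surviving ones genuinely live in the decaying tail — to be the only real obstacle; the cutoff construction and the Poincar\'e/Young steps are routine.
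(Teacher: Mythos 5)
Your proposal is correct and follows essentially the same route as the paper: extend $\ve$ by zero so it is admissible in \eqref{eq:extended_obstacle}, test \eqref{eq:truncobstacle} with the cutoff competitor $\rho\,\ue$ (the paper's $\rho$ is your $\chi$), cancel the $f$-pairings since the traces at $y=0$ agree, and control the leftover commutator term supported in $\Omega\times(\Y/2,\Y)$ via Lemma~\ref{lem:exdecay} applied at height $\Y/2$, which after absorption yields the rate $e^{-\sqrt{\lambda_1}\Y/8}$. The paper outsources the commutator estimate to \cite[Lemma 3.3]{NOS}, which is precisely your $|\chi'|\lesssim\Y^{-1}$ plus weighted Poincar\'e argument.
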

\begin{proof}
By definition, we have
\[
  \|\GRAD (\ue - \ve) \|_{L^2(y^\alpha,\C)}^2 = \int_\C y^\alpha \GRAD
  \ue \GRAD (\ue - \ve ) - \int_\C y^\alpha \GRAD \ve \GRAD(\ue - \ve).
\]
We examine each term separately. For the first term, since $\ve
\in \mathcal{K}$, we set $w = \ve$ in \eqref{eq:extended_obstacle} to get
\begin{equation}
\label{eq:KK}
  \int_\C y^\alpha \GRAD \ue \GRAD (\ue - \ve) \leq d_s \langle f, \tr(\ue - \ve) \rangle_{\Hsd \times \Hs}.
\end{equation}
For the second term we would like to set $w = \ue$ in \eqref{eq:truncobstacle}, but
this is not a valid test function because $\ue \notin \HL(y^\alpha,\C_\Y)$.
Instead, let $\rho \in W^1_\infty(0,\infty)$ be a smooth cutoff function such that $\rho \equiv 1$ on $\left[0,\tfrac\Y2\right]$, 
$\rho = \tfrac{2}{\Y}(\Y-y)$ on $\left[\tfrac\Y2,\Y\right]$ and $\rho = 0$ on $[\Y,\infty)$; see \cite[(3.7)]{NOS}.
Then, we write
\begin{equation}
\label{aux}
  \int_\C y^\alpha \GRAD \ve \GRAD(\ve-\ue) = \int_{\C_\Y} y^\alpha \GRAD \ve \GRAD(\ve - \rho \ue) + \int_{\C_\Y} y^\alpha \GRAD \ve \GRAD\left( (\rho-1)\ue\right).
\end{equation}
Set $w = \rho \ue \in \mathcal{K}_\Y$ in \eqref{eq:truncobstacle} to obtain
\begin{equation}
\label{eq:PP}
  \int_{\C_\Y} y^\alpha \GRAD \ve \GRAD(\ve - \rho \ue) \leq d_s \langle f, \tr(\ve-\ue) \rangle_{\Hsd \times \Hs}.
\end{equation}
In light of \eqref{eq:KK}--\eqref{eq:PP}, we see that the result will follow if we bound the last term in \eqref{aux}. 
For that we use Lemma~\ref{lem:exdecay} and the arguments of \cite[Lemma 3.3]{NOS} to obtain
\begin{align*}
  \int_{\C_\Y} y^\alpha \GRAD \ve \GRAD\left( (\rho-1)\ue\right)  \lesssim e^{-\sqrt{\lambda_1} \Y/4}( \| \psi \|_{\Hs} + \| f \|_{\Hsd} )
  \|\GRAD \ve \|_{L^2(y^\alpha,\C_\Y)},
\end{align*}
which readily yields the asserted estimate.
\end{proof}

We now present an almost optimal a priori error estimate which
relies on \cite{BHR:77} and Theorem~\ref{TH:H1_opt_est}.

\begin{theorem}[almost optimal error estimate]
Let $\T_\Y \in \Tr$ be the tensor product grid described in Theorem \ref{TH:fl_error_estimates}.
% , which is quasi-uniform in $\Omega$ and graded in the 
% extended variable so that \eqref{graded_mesh} holds and $\Y \approx \log(\# \T_\Y)$. 
If $\ue \in \mathcal{K}$ and $V_{\T_{\Y}} \in \mathcal{K}_{\T_{\Y}}$ solve \eqref{eq:extended_obstacle}
and \eqref{eq:FEfracobstacle}, respectively, we have
\begin{equation}
\label{obs_optimal_rate}
  \| \ue - V_{\T_\Y} \|_{\HLn(y^\alpha,\C)} \leq C
|\log(\# \T_{\Y})|^s(\# \T_{\Y})^{-1/(n+1)},
\end{equation}
Alternatively, if $u$ solves \eqref{eq:frac_obstacle}, then
\begin{equation}
\label{frac_suboptimal_rate}
\| u - V_{\T_\Y}(\cdot,0) \|_{\Hs} 
\leq C
|\log(\# \T_{\Y})|^s(\# \T_{\Y})^{-1/(n+1)}.
\end{equation}
where, in both inequalities, $C$ depends on the H\"older moduli of 
smoothness of $\ve$ given by \eqref{eq:optimal_reg} and \eqref{eq:optimal_reg_cyl}, $\|f\|_{\Hsd}$ and $\| \psi \|_{\Hs}$.
\end{theorem}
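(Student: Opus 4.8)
The plan is to route the analysis through the truncated solution $\ve$. By the triangle inequality (extending $\ve$ by zero to $\C\setminus\C_\Y$),
\[
\|\ue - V_{\T_\Y}\|_{\HLn(y^\alpha,\C)} \le \|\ue - \ve\|_{\HLn(y^\alpha,\C_\Y)} + \|\ve - V_{\T_\Y}\|_{\HLn(y^\alpha,\C_\Y)} .
\]
The first, \emph{truncation}, term is controlled by Proposition~\ref{prop:experr}, which bounds it by $e^{-\sqrt{\lambda_1}\Y/8}(\|\psi\|_{\Hs}+\|f\|_{\Hsd})$; choosing $\Y \approx \log(\#\T_\Y)$, exactly as in the linear theory underlying Theorem~\ref{TH:fl_error_estimates}, makes this contribution no larger than $(\#\T_\Y)^{-1/(n+1)}$ and hence harmless. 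Everything thus reduces to the \emph{discretization} term $\|\ve-V_{\T_\Y}\|_{\HLn(y^\alpha,\C_\Y)}$, which I would bound by adapting the Brezzi--Hager--Raviart argument \cite{BHR:77} behind Theorem~\ref{TH:H1_opt_est} to the present weighted, nonconforming setting.

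To that end, set $a(v,w):=\int_{\C_\Y}y^\alpha\GRAD v\,\GRAD w$ and take $W:=\Pi_{\T_\Y}\ve$ as discrete test function; this is admissible, $W\in\mathcal{K}_{\T_\Y}$, because the discrete constraint in \eqref{eq:K_discreteFlap} was defined through $\tr\Pi_{\T_\Y}\Psi$ precisely so that $\tr\Pi_{\T_\Y}\ve\ge\tr\Pi_{\T_\Y}\Psi$. Then
\[
\|\ve-V_{\T_\Y}\|_{\HLn(y^\alpha,\C_\Y)}^2 = a(\ve-V_{\T_\Y},\ve-\Pi_{\T_\Y}\ve) + a(\ve-V_{\T_\Y},\Pi_{\T_\Y}\ve-V_{\T_\Y}) .
\]
Young's inequality bounds the first summand by $\tfrac12\|\ve-V_{\T_\Y}\|^2_{\HLn(y^\alpha,\C_\Y)}+\tfrac12\|\ve-\Pi_{\T_\Y}\ve\|^2_{\HLn(y^\alpha,\C_\Y)}$, and the best-approximation term $\|\ve-\Pi_{\T_\Y}\ve\|_{\HLn(y^\alpha,\C_\Y)}$ is estimated, just as in Theorem~\ref{TH:fl_error_estimates}, by the quasi-interpolation theory of \cite{NOS,NOS2} on the graded mesh \eqref{graded_mesh}, now invoking the Hölder regularity \eqref{eq:optimal_reg_cyl} of $\ve$ in $\C_\Y$; this is what produces the factor $|\log(\#\T_\Y)|^s(\#\T_\Y)^{-1/(n+1)}$. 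For the second summand, $\ve$ is $\alpha$-harmonic in the interior of $\C_\Y$ (test \eqref{eq:truncobstacle} with $\ve\pm\varphi$ for $\varphi\in C_c^\infty$ supported away from $\partial\C_\Y$), and $\Pi_{\T_\Y}\ve-V_{\T_\Y}$ vanishes on $\Gamma_D$, so integration by parts together with \eqref{eq:frac_complementary_extension} gives $a(\ve,\Pi_{\T_\Y}\ve-V_{\T_\Y}) = \langle d_sf+\ze,\tr(\Pi_{\T_\Y}\ve-V_{\T_\Y})\rangle$. Subtracting the discrete variational inequality \eqref{eq:FEfracobstacle} tested with the same $W$ cancels the $d_sf$ term and yields
\[
a(\ve-V_{\T_\Y},\Pi_{\T_\Y}\ve-V_{\T_\Y}) \le \langle\ze,\tr(\Pi_{\T_\Y}\ve-V_{\T_\Y})\rangle .
\]

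It remains to control $\langle\ze,\tr(\Pi_{\T_\Y}\ve-V_{\T_\Y})\rangle$; following the splitting used in \eqref{aux:rhs}, I would write it as $\langle\ze,\tr(\Pi_{\T_\Y}\ve-\ve)\rangle + \langle\ze,\tr\ve-\psi\rangle + \langle\ze,\psi-\tr V_{\T_\Y}\rangle$. The middle term vanishes by the complementarity relation $\ze(\tr\ve-\psi)=0$ in \eqref{eq:frac_complementary_extension}. Writing $\psi=\tr\Psi$ and $\psi-\tr V_{\T_\Y}=\tr(\Psi-\Pi_{\T_\Y}\Psi)+(\tr\Pi_{\T_\Y}\Psi-\tr V_{\T_\Y})$, the last piece is nonpositive since $\ze\ge0$ and $\tr V_{\T_\Y}\ge\tr\Pi_{\T_\Y}\Psi$. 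The two surviving contributions, $\langle\ze,\tr(\Pi_{\T_\Y}\ve-\ve)\rangle$ and $\langle\ze,\tr(\Psi-\Pi_{\T_\Y}\Psi)\rangle$, are estimated using $\ze=\partial^\alpha_\nu\ve-d_sf\in\Hsd$ (with norm bounded in terms of $\|f\|_{\Hsd}$ and the modulus in \eqref{eq:optimal_reg}) and the trace inequality \eqref{Trace_estimate}, which reduces them to $\lesssim\|\ze\|_{\Hsd}\big(\|\ve-\Pi_{\T_\Y}\ve\|_{\HLn(y^\alpha,\C_\Y)}+\|\Psi-\Pi_{\T_\Y}\Psi\|_{\HLn(y^\alpha,\C_\Y)}\big)$, the new best-approximation term for the $\alpha$-harmonic extension $\Psi$ of the obstacle being again handled by \cite{NOS,NOS2}. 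Collecting all bounds, absorbing the $\tfrac12\|\ve-V_{\T_\Y}\|^2$ terms, and adding back the truncation estimate gives \eqref{obs_optimal_rate}; estimate \eqref{frac_suboptimal_rate} then follows from $\|u-V_{\T_\Y}(\cdot,0)\|_{\Hs}\le C_{\tr}\|\ue-V_{\T_\Y}\|_{\HLn(y^\alpha,\C)}$.

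I expect the genuinely delicate points to be: (i) verifying admissibility of $\Pi_{\T_\Y}\ve$, i.e.\ $\tr\Pi_{\T_\Y}(\ve-\Psi)\ge0$, which rests on $\ve\ge\Psi$ in $\C_\Y$ and on a positivity/monotonicity property of the quasi-interpolant; (ii) extracting the optimal rate (up to the logarithm) from $\|\ve-\Pi_{\T_\Y}\ve\|$ and $\|\Psi-\Pi_{\T_\Y}\Psi\|$ despite the solution having only the reduced regularity \eqref{eq:optimal_reg_cyl} --- this should work because, exactly as in \cite{NOS}, the graded mesh \eqref{graded_mesh} with $\gamma>3/2s$ resolves the singular behaviour as $y\to0^+$ so that only the transversal regularity interacts with the quasi-uniform mesh in $\Omega$, but the weighted bookkeeping must be carried out with care; and (iii) the corresponding control of the obstacle's extension $\Psi$, which is where the dependence on $\|\psi\|_{\Hs}$ (together with the Hölder moduli in \eqref{eq:optimal_reg} and \eqref{eq:optimal_reg_cyl}) enters.
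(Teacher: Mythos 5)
Your overall skeleton coincides with the paper's: triangle inequality through the truncated solution $\ve$, Proposition~\ref{prop:experr} for the truncation error, the Brezzi--Hager--Raviart decomposition with $W=\Pi_{\T_\Y}\ve$ (whose admissibility indeed rests on $\ve-\Psi\ge0$ and positivity of $\Pi_{\T_\Y}$, as you note), integration by parts to isolate the boundary term $\int_{\Omega\times\{0\}}\ze\,\tr(\Pi_{\T_\Y}\ve-V_{\T_\Y})$, and the three-way splitting in which one piece vanishes by complementarity and another is nonpositive by the sign conditions. Up to that point you are reproducing the paper's argument.

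The gap is in your treatment of the surviving consistency terms. You bound $\langle\ze,\tr(\Pi_{\T_\Y}\ve-\ve)\rangle$ and $\langle\ze,\tr(\Psi-\Pi_{\T_\Y}\Psi)\rangle$ by duality, $\lesssim\|\ze\|_{\Hsd}\,\|\ve-\Pi_{\T_\Y}\ve\|_{\HLn(y^\alpha,\C_\Y)}$ etc., which is only \emph{first order} in the mesh size. Plugging a first-order right-hand side into the quadratic inequality
\begin{equation*}
\tfrac12\|\ve-V_{\T_\Y}\|^2_{\HLn(y^\alpha,\C_\Y)}\ \lesssim\ \|\ve-\Pi_{\T_\Y}\ve\|^2_{\HLn(y^\alpha,\C_\Y)}+\langle\ze,\cdot\rangle
\end{equation*}
yields only $\|\ve-V_{\T_\Y}\|\lesssim(\#\T_\Y)^{-1/(2(n+1))}$, i.e.\ half the claimed rate. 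This is the classical pitfall already visible in Theorem~\ref{TH:H1_opt_est}, where the analogous term $(\lambda,I_\T(u-\psi)-(u-\psi))$ must be bounded by $h_\T^2$, not $h_\T$, using an $L^2$--$L^2$ pairing rather than an $H^{-1}$--$H^1$ one. The paper therefore keeps the combination $\calI=\int_\Omega\ze\big(\tr\Pi_{\T_\Y}(\ve-\Psi)-\tr(\ve-\Psi)\big)$ intact (never separating $\ve$ from $\Psi$) and estimates it cell by cell in $L^\infty$--$L^1$, distinguishing three cases according to the sign of $\ve-\psi$ on the patch $S_K$: where $\ve-\psi>0$ the multiplier $\ze$ vanishes; where $\ve-\psi\equiv0$ the max-norm stability of $\Pi_{\T_\Y}$ and \eqref{eq:optimal_reg_cyl} give $\|\tr\Pi_{\T_\Y}(\ve-\Psi)\|_{L^\infty(K)}\lesssim h_I^{2s}$, which the grading $\gamma>3/2s$ converts into $(\#\T_\Y)^{-2/(n+1)}$; and near the free boundary the growth estimate \eqref{reg:CSS} gives $\tr\ve-\psi\lesssim h_{\T_\Omega}^{1+s}$ while the H\"older regularity \eqref{eq:optimal_reg} gives $\ze\lesssim h_{\T_\Omega}^{1-s}$, whose product is again $h_{\T_\Omega}^{2}$. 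This pointwise, free-boundary-sensitive analysis is the heart of the proof and is exactly what your duality estimate replaces and loses; without it the quasi-optimal rate \eqref{obs_optimal_rate} does not follow. (A secondary issue: your splitting also requires an interpolation estimate for $\Psi$ alone, which is not controlled at second order by $\|\psi\|_{\Hs}$; the paper avoids this by never decoupling $\ve$ and $\Psi$.)
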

\begin{proof}
We first compare the solution $\ve$ of the truncated problem and the discrete solution $V_{\T_\Y}$. To do so, we proceed as in Theorem~\ref{TH:H1_opt_est}. We just need to examine
\begin{multline*}
\int_{\C_{\Y}} y^{\alpha} \nabla(\ve - V_{\T_{\Y}}) \nabla(\Pi_{\T_{\Y}}\ve - V_{\T_{\Y}})
 \leq  - \int_{\C_{\Y}} \textrm{div}(y^{\alpha} \nabla \ve) \left(\Pi_{\T_{\Y}}\ve - V_{\T_{\Y}} \right)
\\ + \int_{\Omega \times \{ 0 \} } \left( \partial_{\nu}^{\alpha} \ve -d_s f \right) 
\tr \left( \Pi_{\T_{\Y}}\ve - V_{\T_{\Y}} \right)
 = \int_{\Omega \times \{ 0 \} } \ze \tr ( \Pi_{\T_{\Y}}\ve - V_{\T_{\Y}} ),
\end{multline*}
because $\textrm{div}(y^{\alpha} \nabla \ve)=0$ in $\C_\Y$ and $\ze$ is defined in \eqref{eq:frac_complementary_extension}.
We can rewrite the preceding term as
\begin{equation}
\label{eq:bdryint}
\begin{aligned}
  \int_{\Omega \times \{ 0 \} } \ze  \tr ( \Pi_{\T_{\Y}} \ve - V_{\T_{\Y}} )
& = \int_{\Omega \times \{ 0 \} } \ze  \tr ( \ve - \Psi ) 
  + \int_{\Omega \times \{ 0 \} } \ze \tr ( \Pi_{\T_{\Y}} \Psi - V_{\T_{\Y}} ) \\
&+ \int_{\Omega \times \{ 0 \} } \ze \left( \tr \Pi_{\T_{\Y}}(\ve-\Psi) - \tr(\ve-\Psi) \right) .
\end{aligned}
\end{equation}
Using \eqref{eq:frac_complementary_extension}, together with $\tr\Psi = \psi$ and 
$\tr \big(V_{\T_{\Y}} - \Pi_{\T_{\Y}} \Psi\big) \ge 0$, we infer the estimates
$\int_{\Omega \times \{ 0 \} } \ze  \tr(\ve - \Psi)= 0$ and 
$\int_{ \Omega \times \{ 0 \} } \ze  \tr ( \Pi_{\T_{\Y}} \Psi - V_{ \T_{\Y} } )  \leq 0$.
% \[
%  \RHN{\int_{\Omega \times \{ 0 \} } \ze  \AJS{\tr(\ve - \Psi)}= 0,
%  \qquad
%  \int_{ \Omega \times \{ 0 \} } \ze  \AJS{\tr ( \Pi_{\T_{\Y}} \Psi - V_{ \T_{\Y} } ) } \leq 0}.
% \]
We now express the last term on the right hand side of \eqref{eq:bdryint} as follows:
\[
  \calI := 
         \sum_{K \in \T_\Omega} \int_K \ze \left( \tr \Pi_{\T_{\Y}}(\ve-\Psi) - \tr(\ve-\Psi) \right)
        = \sum_{K \in \T_\Omega} \calI(K).
\]
We next examine separately the cells $K \in \T_\Omega$
according to the value of $\ve - \psi$ in $S_K$, a discrete
neighborghood of $K$.
The issue at stake is that the interpolation operator
$\Pi_{\T_\Y}$ hinges on
local averages over a discrete neighborghood
$S_T$ of $T= K \times I$. This leads to the following three cases.

Case \mbox{1}: $\ve - \psi > 0$ in $S_K$. In this situation, $\ze
\equiv 0$ in $S_K$ and thus $\calI(K)$ vanishes.

Case \mbox{2}: $\ve - \psi \equiv 0$ in $S_K$. The fact that
  $\Pi_{\T_\Y}$ is max-norm stable locally yields
\begin{equation}
\label{eq:ninja}
  \| \tr \Pi_{\T_\Y} (\ve - \Psi) \|_{L^\infty(K)} \lesssim \| \ve -
  \Psi \|_{L^\infty(S_T)} \lesssim h_I^{2s}. 
\end{equation}
The case $s \leq \tfrac12$ follows immediately from \eqref{eq:optimal_reg_cyl}. For $s>\tfrac12$, we use that
$\mathscr{E} :=\ve - \Psi$ solves an $\alpha$-harmonic extension problem and then its conormal derivative 
$\partial_\nu^\alpha \mathscr{E} = -\lim_{y\to0} y^{1-2s}\partial_y\mathscr{E}$ is well
defined. We realize that $\partial_y\mathscr{E}|_{y=0}$ vanishes in $\Omega$ because
$s>\frac12$, which together with \eqref{eq:optimal_reg_cyl} allows us to derive \eqref{eq:ninja}.
This, combined with \eqref{eq:optimal_reg}, implies the following bound
\[
  \calI(K) \leq \| \ze \|_{L^\infty(K)} \| \tr \Pi_{\T_\Y} (\ve - \Psi) \|_{L^\infty(K)} |K| \lesssim |K| h_I^{2s}.
\]
Since $h_I \approx (\# \T_\Y)^{-\gamma/(n+1)}$ and $\gamma > 3/2s$, we
thus conclude $\calI(K) \lesssim |K| (\# \T_\Y)^{-2/(n+1)}$.

Case \mbox{3}: $\tr\ve - \psi$ is not identically zero nor strictly
positive in $S_K$. In view of \eqref{reg:CSS}, we have
\begin{equation}
\label{eq:growthx}
 0 \le (\tr \ve - \psi)(x')
 \lesssim h_{\T_{\Omega}}^{1+s} \approx (\# \T_{\Y} )^{-(1+s)/(n+1)} 
\quad \forall x' \in S_K.
\end{equation}
On the other hand, using \eqref{eq:optimal_reg} we deduce 
$\ze = \partial_{\nu}^{\alpha} \ve - d_s f \in C^{1-s}(\bar\Omega)$ and
\begin{equation}
\label{eq:zgrowth}
0 \leq \ze(x') \lesssim (\# \T_{\Y} )^{-(1-s)/(n+1)}
\quad \forall x' \in S_K.
\end{equation}
Consequently,
  $\int_K \ze \tr (\ve -\Psi ) \lesssim |K| (\# \T_\Y)^{-2/(n+1)}$.
The fact that there is a point $x_0' \in S_K$ where $(\tr\ve-\psi)(x_0')=0$, 
and an argument similar to the one that led us to
\eqref{eq:ninja} on the basis of \eqref{eq:optimal_reg_cyl}, in conjunction with
\eqref{eq:growthx} and \eqref{eq:zgrowth}, yield
\[
  \| \tr \Pi_{\T_\Y} (\ve - \Psi) \|_{L^\infty(K)} \lesssim \| \ve - \Psi \|_{L^\infty(S_T)} 
  \lesssim \max\big\{h_{\T_\Omega}^{1+s}, h_I^{2s} \big\} \lesssim h_{\T_\Omega}^{1+s}
\]
and $\int_K \ze \tr \Pi_{\T_\Y}
  (\ve -\Psi ) \lesssim |K| (\# \T_\Y)^{-2/(n+1)}$. Hence
$
\calI(K) \lesssim |K| (\# \T_\Y)^{-2/(n+1)}.
$

Collecting the estimates for the three cases, we thus conclude
\[
   \| \ve - V_{\T_\Y} \|_{\HLn(y^\alpha,\C_\Y)} \lesssim \Y^s (\# \T_{\Y})^{-1/(n+1)},
\]
where $\Y$ accounts for the interpolation
estimate of $\|\ve-\Pi_{\T_\Y}\ve\|_{\HLn(y^\alpha,\C_\Y)}$
based on the mesh grading \eqref{graded_mesh} \cite{NOS}.
The estimate \eqref{obs_optimal_rate} follows from
Proposition~\ref{prop:experr} and a suitable choice of the parameter
$\Y$ in terms of $\# \T_{\Y}$; see \cite[Remark 5.5]{NOS}. 
Finally, \eqref{Trace_estimate} and \eqref{obs_optimal_rate} lead to
\eqref{frac_suboptimal_rate}.
\end{proof}

%%%%%%%%%%%%%%%%%%%%%%%%%%%%%%%%%%%%%%%%%%%%%%%%%%
\section{Conclusions and open problems}\label{sec:conclusion}
%%%%%%%%%%%%%%%%%%%%%%%%%%%%%%%%%%%%%%%%%%%%%%%%%%

Several topics of interest were only mentioned in passing or not at all. Among them mixed methods, 
monotone multigrid methods and a posteriori error estimation come to mind. Other discretization techniques 
such as nonconforming finite elements, virtual elements, or mimetic finite differences have not been described.
Let us also list some open problems of interest. 
The convergence properties of multigrid methods in higher dimensions is still not well understood.
The lack of duality techniques makes obtaining error estimates 
in $L^2(\Omega)$ for the classical obstacle problem rather difficult. Pointwise error estimates for the 
thin and fractional obstacle problems are rather technical but
appear to be an interesting open issue.
Other obstacle-type problems, such as time dependent problems 
and high-order equations, might require techniques other
than those presented here and were not discussed.

%============ References ==================
\bibliographystyle{plain}
\bibliography{biblio}

\end{document}